\documentclass[reqno]{amsart}
\usepackage{amsmath,amssymb,epsfig,graphicx}
\usepackage{hyperref}
\usepackage[hmarginratio=1:1, bottom=3cm]{geometry}

\usepackage[utf8]{inputenc} 
\usepackage{color}
\usepackage{enumerate}
\usepackage{bbm}
\usepackage{setspace}
\usepackage{subfig}
\usepackage{subfloat}
\usepackage{enumitem}
\usepackage{enumerate}
\usepackage{cancel}
\usepackage[normalem]{ulem}
\usepackage{booktabs}

\usepackage{multirow}

\newtheorem{remark}{Remark}
\newtheorem{prop}{Proposition}[section]

\newcommand{\cA}{{\mathcal A}}

\newcommand\R{{\mathbb R}}
\renewcommand\P{{\mathbb P}}

\newcommand\E{{\mathbb E}}

\newcommand\N{{\mathbb N}}
\newcommand\Z{{\mathbb Z}}
\newcommand\vare{{\varepsilon}}
\definecolor{newgreen}{rgb}{0,0.6,0.3}


\newcommand{\cE}{{\mathcal E}}

\newcommand{\dx}{\Delta x}

\newcommand{\veps}{\varepsilon}
\newcommand{\beqn}{\begin{equation}}
\newcommand{\eeqn}{  \end{equation}}
\newcommand{\beqno}{\begin{equation*}}
\newcommand{\eeqno}{  \end{equation*}}
\newcommand{\be}{\begin{eqnarray}}
\newcommand{\ee}{  \end{eqnarray}}
\newcommand{\beno}{\begin{eqnarray*}}
\newcommand{\eeno}{  \end{eqnarray*}}

\newcommand{\conv}{\rightarrow}
\numberwithin{equation}{section}
\pagenumbering{arabic}

\newcommand{\VF}{V}

\usepackage{floatrow}

\begin{document}

\author[Athena Picarelli]{Athena Picarelli}
\address{Economics Department, University of Verona, Via Cantarane 24, 37129, Verona, Italy}
\email{athena.picarelli@univr.it}
\author[Christoph Reisinger]{Christoph Reisinger}
\address{Mathematical Institute, University of Oxford, Andrew Wiles Building, OX2 6GG, Oxford, UK}
\email{christoph.reisinger@maths.ox.ac.uk}

\title[Probabilistic error estimates for optimal control problems]{Probabilistic error analysis
 for some approximation schemes to optimal control problems}
\maketitle

\begin{abstract}
{We introduce a class of numerical schemes for  optimal stochastic control problems based on a novel Markov chain approximation, which uses, in turn, a piecewise constant policy approximation, Euler-Maruyama time stepping, and
a Gau{\ss}-Hermite approximation of the Gau{\ss}ian increments.
We provide lower error bounds of order arbitrarily close to 1/2 in time and 1/3 in space for Lipschitz viscosity solutions,
coupling probabilistic arguments with regularization techniques as introduced by Krylov. {The corresponding order of the upper bounds is 1/4 in time and 1/5 in space.
For sufficiently regular solutions, the order is 1 in both time and space for both bounds. Finally, we propose techniques for further improving the accuracy of the individual components of the approximation.}
}
\end{abstract}

\noindent
{\footnotesize {\bf Keywords}: Optimal stochastic control, Markov chain approximation schemes, error estimates}


\section{Introduction}\label{sect:intro}
Let $\left(\Omega, \mathbb F, \P\right)$ be a probability space with filtration  $\left\{\mathbb F_t, t\geq0\right\}$ induced by an
 $\R^p$-Brownian motion $B$ for some $p\geq 1$.
We consider a controlled process governed by
\begin{equation}\label{SDE}
\left\{\begin{array}{l}
\mathrm{d}X_s=\mu(s,X_s,\alpha_s)\,\mathrm{d}s+ \sigma(s,X_s,\alpha_s) \,\mathrm{d} B_s, \qquad s\in(t, T), \\
X_t=x,\end{array}\right.
\end{equation}
where $\mu$ and $\sigma$ take values, respectively, in $\R^d$ and $\R^{d\times p}$.
We assume that the control vector process $\alpha$ {belongs to the set  $\cA$ of progressively measurable processes with values in $A\subseteq \R^q$}.
For any $x\in\R^d$, we will denote by $X^{t,x,\alpha}_{\cdot}$ the {unique} strong solution of \eqref{SDE},
{under the assumptions specified later}. {To simplify the notation, where no ambiguities arise, we will indicate the starting point $(t,x)$ of the processes involved as a subscript in the expectation, i.e. $\E_{t,x}[\cdot]$.}

Given {$T>0$ and two real valued functions  $g$ and $\psi$, namely the running and terminal cost, respectively,}   for any $x\in\R^d$,
$t\in [0,T]$, the value function of the optimal control problem we consider is defined by 
\begin{align}
\label{eq:value_v}
v(t,x):=\sup_{\alpha\in\cA}\E_{t,x}\left[\psi(X^{\alpha}_{T}) + \int^T_t g(s,X^\alpha_s,\alpha_s)\mathrm d s\right].
\end{align}


It is well known that this problem is related to the solution of a second order  Hamilton-Jacobi-Bellman (HJB) equation for which, in the general case, solutions are considered 
the viscosity sense (see, for instance, \cite{CIL92}). 
Furthermore, explicit solutions for this kind of nonlinear equations are rarely available, so that their numerical approximation becomes vital.
The seminal work by Barles and Souganidis \cite{BS91} establishes 
the basic framework for convergence of numerical schemes to viscosity solutions of HJB equations. The fundamental properties required are: monotonicity, consistency, and stability of the scheme.
We recall that, in multiple dimensions, standard finite difference schemes are in general non-monotone.
As an alternative to finite difference schemes,
semi-Lagrangian (SL) schemes \cite{M89,CF95,DJ12} 
are monotone by construction.
The schemes we introduce belong to this family.

In general, the provable order of convergence for second order HJB equations is significantly less than one.
By a technique pioneered by Krylov based on ``shaking the coefficients'' and mollification to construct smooth sub- and/or super-solutions,
\cite{K97, K00, BJ02, BJ05, BJ07} prove certain fractional convergence orders, mainly using PDE-based techniques
{
which rely on a comparison principle between viscosity sub- and super-solutions and estimates on the consistency error of the numerical scheme}.

{
Here, we study a new family of SL schemes based on a discrete time approximation of the optimal control problem.
We use purely probabilistic techniques and a direct comparison of the two optimal control problems to obtain error estimates which are,
to our knowledge, the best ones available in the literature under these weak assumptions.}

An important step in order to define our scheme is to approximate the set of controls $\cA$ by piecewise constant controls. This introduces an asymmetry between the upper and the lower bound of the error, {and it is the lower bound where we get an improvement over known results}.

The approach most closely related to ours is arguably \cite{Kry99}, especially Section 5 therein, where approximations based on piecewise constant policies and subsequently on discrete-time random walks are studied. The analysis there utilizes a combination of stochastic and analytic techniques, in particular through controlling the approximation error by the truncation error between the generator of the controlled process and its discrete approximation, and It{\^o}'s lemma with the dynamic programming principle to aggregate the local error over time.
We will be able to improve the order of the error bounds partly by using recent improved estimates for the piecewise constant policy approximation in \cite{JakoPicaReisi18}, but also
by avoiding the use of the truncation error, the order of which is limited to 1, and replacing it by a direct estimate of the strong and weak approximation error of the scheme for the stochastic differential equation.

The main contributions of this paper are as follows.
\begin{itemize}
\item
We propose
new discrete approximations of controlled diffusion processes based on piecewise constant controls over intervals of length $h$ and $M$ Gau{\ss}-Hermite points.
\item
We present a novel analysis technique for the resulting 
semi-discrete approximations by purely probabilistic arguments and direct use of the dynamic programming principle.
\end{itemize}
This allows us to derive
one-sided, lower error bounds of order $h^{(M-1)/2M} + \Delta x^{(M-1)/(3M-1)}$ for timestep $h$ and spatial mesh size $\Delta x$, for Lipschitz viscosity solutions (assumptions {(H1)} to {(H3)} {below}). They coincide with the two-sided bounds in \cite{DJ12} for the standard linear-interpolation SL scheme, i.e.\ $M=2$, and improve them for $M>2$.
The achieved upper bounds are identical to \cite{DJ12}, {i.e.\ of order $h^{1/4} + \Delta x^{1/5}$}.
For sufficiently smooth solutions, the corresponding error bounds are of order $1$ in both $h$ and $\Delta x$.

The paper is organised as follows. In Section \ref{sec:setting} we present the setting and the main assumptions for the optimal control problem and we describe 
 the piecewise constant policy approximation.
In Section \ref{sect:disc_scheme} the Markov-chain approximation scheme is introduced and error bounds are obtained  in Section \ref{sec:lower}.
Section \ref{sec:smooth_case} discusses the order obtained in the case of smooth solutions and further improvements of the components of the scheme, including higher order time stepping and interpolation, while { Section \ref{sec:num} demonstrates the improvement achieved by a higher order scheme numerically.}
Section \ref{sec:concl} concludes.


\section{Main assumptions and preliminaries}\label{sec:setting} 
 {In what follows, $|\cdot|$ denotes the Euclidean norm in $\R^n$ for any $n\geq 1$ and $\|\cdot\|$ its induced matrix norm. }  
We consider standard assumptions on the optimal control problem:
\begin{itemize}
\item[\bf{(H1)}]
 $A$  {is a compact subset of a separable metric space};
 \item[\bf{(H2)}] $\mu:[0,T]\times \R^d \times A\to\R^d$ and $\sigma:[0,T]\times \R^d\times A\to \R^{d\times p}$ are continuous functions and there exists $C_0\geq 0$ such that for any  $t, s \in [0,T], x, y \in\R^d, a\in A$
 \begin{eqnarray*}
&& \left|\mu(t,x,a)-\mu(s,y,a)\right|+\left\|\sigma(t,x,a)-\sigma(s,y,a)\right\|\leq C_0  \left(\left|x-y\right|+ \left|t-s\right|^{1/2}\right); 
 \end{eqnarray*}
\item[\bf{(H3)}] $\psi:\R^d\to\R$ and $g:[0,T]\times \R^d \times A\to\R$  are  continuous functions and there exists  $L\geq 0$ such that for any  $t, s \in [0,T], x, y \in\R^d, a\in A$
$$
\left|\psi(x) - \psi(y)\right|+ \left|g(t,x,a)-g(s,y,a)\right|\leq L \left(\left|x-y\right|+ \left|t-s\right|^{1/2}\right).
$$
\end{itemize}

Under these assumptions one can prove the following regularity result on  $v$:
\begin{prop}[{\cite[Proposition 3.1, Chapter IV]{YZ}}]\label{prop:vLip}
Let (H1)-(H3) be satisfied. There exists $C\geq 0$  such that for any $x,y\in\R^d$ and $t\in [0,T]$
$$
\left|v(t,x)-v(t,y)\right|\leq L C|x-y|
$$
 (where $L$ denotes the Lipschitz constant of $\psi$ and {$C$ only depends on $T$ and $C_0$ in assumption (H2)}).
\end{prop}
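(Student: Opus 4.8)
The plan is to compare the two control problems started at $(t,x)$ and $(t,y)$ using the \emph{same} admissible control, thereby reducing the statement to a stability estimate for the controlled SDE \eqref{SDE} with respect to its initial datum, uniform over $\alpha\in\cA$.

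\textbf{Step 1 (reduction to an SDE estimate).} Fix $t\in[0,T]$, $x,y\in\R^d$ and $\vare>0$, and choose $\alpha\in\cA$ that is $\vare$-optimal for $v(t,x)$. Since the same $\alpha$ is admissible for the problem started at $y$, we get
\[
v(t,x)-v(t,y)\le \E\!\left[\psi(X^{t,x,\alpha}_T)-\psi(X^{t,y,\alpha}_T)+\int_t^T\!\big(g(s,X^{t,x,\alpha}_s,\alpha_s)-g(s,X^{t,y,\alpha}_s,\alpha_s)\big)\,\mathrm ds\right]+\vare.
\]
Applying the Lipschitz continuity of $\psi$ and $g$ in the state variable from (H3),
\[
v(t,x)-v(t,y)\le L\,\E\big[|X^{t,x,\alpha}_T-X^{t,y,\alpha}_T|\big]+L\int_t^T\E\big[|X^{t,x,\alpha}_s-X^{t,y,\alpha}_s|\big]\,\mathrm ds+\vare.
\]

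\textbf{Step 2 (stability of the SDE in the initial condition).} Set $\Delta_s:=X^{t,x,\alpha}_s-X^{t,y,\alpha}_s$. Subtracting the two copies of \eqref{SDE}, applying It\^o's formula to $|\Delta_s|^2$, taking expectations, and using the Lipschitz continuity of $\mu$ and $\sigma$ in $x$ \emph{uniform in $(s,a)$} contained in (H2) together with the It\^o isometry, one obtains an inequality of the form
\[
\E\big[|\Delta_s|^2\big]\le |x-y|^2+C\int_t^s\E\big[|\Delta_r|^2\big]\,\mathrm dr,
\]
with $C$ depending only on $C_0$. Gr\"onwall's lemma gives $\E[|\Delta_s|^2]\le e^{C(s-t)}|x-y|^2$, and hence by Jensen's inequality $\E[|\Delta_s|]\le e^{CT/2}|x-y|$ for every $s\in[t,T]$, with a constant independent of $\alpha$.

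\textbf{Step 3 (conclusion).} Inserting this bound into Step 1 yields
\[
v(t,x)-v(t,y)\le L\,(1+T)\,e^{CT/2}\,|x-y|+\vare,
\]
where the prefactor depends only on $T$ and $C_0$. Exchanging the roles of $x$ and $y$ and letting $\vare\downarrow 0$ gives $|v(t,x)-v(t,y)|\le L\,\tilde C\,|x-y|$ with $\tilde C=\tilde C(T,C_0)$, which is the assertion. The only genuinely technical point is the uniform-in-control stability estimate of Step 2; it is classical, the crucial feature being that (H2) furnishes a Lipschitz constant for $\mu(s,\cdot,a)$ and $\sigma(s,\cdot,a)$ that is independent of $s$ and $a$, so that Gr\"onwall produces a constant depending only on $C_0$ and $T$. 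Everything else follows directly from the representation \eqref{eq:value_v} and the choice of a near-optimal control.
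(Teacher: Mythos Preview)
Your proof is correct and is exactly the classical argument one finds in standard references. Note, however, that the paper does not give its own proof of this proposition: it simply cites \cite[Proposition 3.1, Chapter IV]{YZ}, so there is nothing to compare against. Your Steps 1--3 (near-optimal control, It\^o plus Gr\"onwall for the mean-square stability of the flow, then Jensen) are precisely the route taken in that reference, with the only cosmetic difference that you keep the running cost $g$ explicit whereas the paper reduces to $g\equiv 0$ immediately after stating the proposition.
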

{ Hereafter we assume $g\equiv 0$. Indeed, if this is not the case it is possible to consider the augmented dynamics $(X_\cdot, Y_\cdot)\in \R^{d+1}$ with $\mathrm d Y_s =  g(s,X_s,\alpha_s)\mathrm d s$ and the modified terminal cost $\varphi(x,y) := \psi(x)+y$.  Denoting by $w(t,x,y)= \sup_{\alpha\in \mathcal A} \mathbb E_{t,x,y}\left[ \varphi(X^\alpha_T,Y^\alpha_T)\right]$, it is sufficient to observe that $v(t,x) = w(t,x,0)$  to recover the aforementioned case.\\  
A fundamental property satisfied by the value function $v$ is the following  Dynamic Programming Principle (DPP) {(see, for instace \cite[Theorem 3.3, Chapter IV]{YZ}): for any $0\leq h\leq T-t$},  one has }
\begin{equation}\label{eq:DPP_v}
v(t,x)= \underset{\alpha\in \cA}\sup \;\E_{t,x}\left[v(t+h,X^{\alpha}_{t+h})\right].
\end{equation}


{The main ideas of our approach apply to a general class of discrete-time schemes.} 
Let {$N\in \mathbb N^*$ and $h=T/N>0$}. We introduce a time mesh 
$
 t_n=nh,
$
for $n=0,\dots,N$.

The first step in our approximation is to introduce a time discretization of the control set. We consider  the set $\cA_h$ of controls in $\cA$ that are constant in each interval $[t_n, t_{n+1})$ for $n=0\dots N-1$, i.e.
$$
{
\cA{_h}:=\Big\{\alpha\in\mathcal A:  
{\; \forall \omega \in \Omega } \;  \exists a_i\in A, \, i=0,\ldots, N-1,  \;  \text{ s.t. }  \alpha_s{(\omega)} \equiv \sum^{N-1}_{i=0} a_i \mathbbm 1_{s\in [t_i,t_{i+1})} 
\Big\}.
}
$${
In what follows, we will identify any element of $\alpha\in \mathcal A_h$ by the sequence of random variables $a_i$ taking values in $A$ (denoted  by $a_i \in A$ for simplicity) and will write $\alpha\equiv (a_0,\ldots, a_{N-1})$.}
We denote by $v_h$ the value function obtained by restricting the supremum in \eqref{eq:value_v} to controls in $\cA_h$, that is 
\begin{equation}\label{eq:value_v_delta}
v_h(t,x):=\underset{\alpha\in\cA_h}\sup \E_{t,x}\left[\psi(X^{\alpha}_T)\right].
\end{equation}
Clearly, since $\cA_h\subseteq \cA$, one has for any $t\in[0,T]$, $x\in\R^d$
\be\label{eq:est_geq}
v(t,x)\geq  v_h(t,x).
\ee
{Under assumptions (H1)-(H3), an upper bound of order $1/6$ for the error related to this approximation was first obtained by Krylov in \cite{Kry99}. Recently, this estimate has been improved to the order $1/4$ in \cite{JakoPicaReisi18}, so that one has 
\be\label{eq:est_krylov}
v(t,x)\leq  v_{h}(t,x)+Ch^{1/4}
\ee
for some constant $C$.
{While the estimates in \cite{Kry99} and \cite{JakoPicaReisi18} are obtained for bounded $\mu,\sigma$ and $\psi$, it follows by similar but more tedious steps that the results also  hold in our framework taking a constant $C$ growing polynomially in the space variable, as already remarked in \cite{JakoPicaReisi18}.} 

The DPP for the value function $v_h$ reads
\begin{equation}\label{eq:DPP_v_delta}
v_h({t_n},x)= \underset{a\in A}\sup \;\E_{t_n,x}\left[v_h({ t_{n+1}},X^{a}_{t_{n+1}})\right].
\end{equation}
In particular,  the restriction of the control set to $\cA_h$ implies that the supremum in \eqref{eq:DPP_v_delta} is taken over the set of control values $A$ (compared with \eqref{eq:DPP_v}). The family of schemes we consider are recursively defined by an approximation of \eqref{eq:DPP_v_delta} and lead to the definition of a numerical solution $V$ {approximating} $v_h$.

\section{Markov chain approximation schemes}\label{sect:disc_scheme}
We  present a class of schemes which are based on a Markov chain approximation of the optimal control problem \eqref{eq:value_v_delta}. This follows the classical philosophy presented in \cite{KD01}, although they take the opposite direction
and use finite difference approximations to construct Markov chains, while here we use time stepping schemes and quadrature formulae
to define SL schemes. Similar probabilistic interpretations of such schemes have been given in  \cite{CF95, Kry99, debrabant2013semi}
for the time-dependent case and in \cite{M89} for the infinite horizon case. What is new here is the construction of schemes with provable higher order error bounds, and the direct use of the dynamic programming principle for the discrete approximation to derive these bounds.

\subsection{Euler-Maruyama scheme}
We start with an approximation of  the process $X^{t,x,\alpha}_\cdot$ by  the Euler-Maruyama scheme. For any  given $\alpha\equiv(a_0,\ldots,a_{N-1})\in \mathcal A_{h}$, we consider the following recursive relation:
\begin{equation}\label{eq:def_M}
X_{t_{i+1}}= X_{t_{i}}+\mu(t_i,X_{t_{i}},a_i)\,h+\sigma(t_i,X_{t_{i}},a_i) \,\Delta B_i
\end{equation}
for $i=0,\ldots, N-1$. The increments  
$\Delta B_i:=( B_{t_{i+1}}- B_{t_i})$
are independent, identically distributed random variables such that
\be\label{eq:N_0_h}
\Delta B_i \sim \sqrt{h} \, \mathcal N(0,I_p)\qquad \qquad \forall i=0,\dots,N-1.
\ee 
We will denote by $\widetilde X^{t_n,x,\alpha}_{\cdot}$ the solution to \eqref{eq:def_M} associated with the control $\alpha\equiv(a_n,\ldots,a_{N-1})\in\cA_{h}$ and such that $\widetilde X^{t_n,x,\alpha}_{t_{n}}=x$.
Under assumptions (H1)-(H2), the rate of strong convergence of the scheme \eqref{eq:def_M} is $1/2$, as given, e.g., in \cite{KloPla}. 
Although the result from there is not directly applicable here as the coefficients are non-Lipschitz in time due to the jumps in
the control process, we can follow the same steps as in the proof of \cite[Theorem 1.1, Chapter I]{Milstein_book},
using the fact that the controls $\alpha\in\cA_h$ are constant over individual timesteps. 
Therefore, one has:
\begin{prop}\label{prop:strong_conv}
Let assumptions (H1)-(H2) be satisfied. Then there exists a constant $\widetilde C\geq 0$ (independent of $h$) such that for any $\alpha\in\cA_h, n=0,\ldots, N, x\in\R^d$, one has
$$
\E_{t_n,x}\left[\,\left|X^{\alpha}_T-\widetilde X^{\alpha}_T\right|\,\right]\leq \widetilde C(1+|x|)h^{1/2}.
$$
\end{prop}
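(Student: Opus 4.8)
The plan is to establish the strong rate $1/2$ by a standard Gr\"onwall-type argument on the discrete error $e_n := \E_{t_n,x}[\,\sup_{0\le k\le n}|X^\alpha_{t_k} - \widetilde X^\alpha_{t_k}|^2\,]$ (or on $\E[|X^\alpha_{t_n}-\widetilde X^\alpha_{t_n}|^2]$ together with a Doob/BDG step at the end to pass to the supremum), exactly as in the proof of \cite[Theorem 1.1, Chapter I]{Milstein_book}, but keeping careful track of the fact that $\mu$ and $\sigma$ are only $1/2$-H\"older in time and, along a control $\alpha\in\cA_h$, have jumps precisely at the grid points $t_i$. The key observation that makes the classical argument go through is that on each interval $[t_i,t_{i+1})$ the control is \emph{frozen} at the constant value $a_i$, so the only source of time-irregularity inside a timestep is the explicit time dependence of $\mu(\cdot,\cdot,a_i)$ and $\sigma(\cdot,\cdot,a_i)$, which by (H2) is $1/2$-H\"older with constant $C_0$ uniformly in $a_i$.

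First I would write, for $t_i \le s < t_{i+1}$, the continuous-time increment $X^\alpha_s - X^\alpha_{t_i} = \int_{t_i}^s \mu(r,X^\alpha_r,a_i)\,\mathrm dr + \int_{t_i}^s \sigma(r,X^\alpha_r,a_i)\,\mathrm dB_r$ and the discrete increment $\widetilde X^\alpha_s$ interpolated as the Euler scheme $\widetilde X^\alpha_{t_i} + \mu(t_i,\widetilde X^\alpha_{t_i},a_i)(s-t_i) + \sigma(t_i,\widetilde X^\alpha_{t_i},a_i)(B_s-B_{t_i})$. Subtracting and splitting the drift and diffusion differences into a ``coefficient evaluated at $(r,X^\alpha_r)$ versus $(t_i,X^\alpha_{t_i})$'' part — controlled by (H2) as $C_0(|X^\alpha_r - X^\alpha_{t_i}| + |r-t_i|^{1/2}) \le C_0(|X^\alpha_r - X^\alpha_{t_i}| + h^{1/2})$ — and a ``$(t_i,X^\alpha_{t_i})$ versus $(t_i,\widetilde X^\alpha_{t_i})$'' part — controlled by $C_0|X^\alpha_{t_i}-\widetilde X^\alpha_{t_i}|$ — one takes squares, expectations, It\^o isometry for the stochastic integrals and Cauchy–Schwarz for the Lebesgue integrals. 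Standard moment bounds $\E_{t_i,x}[\sup_{t_i\le r\le t_{i+1}}|X^\alpha_r - X^\alpha_{t_i}|^2] \le C(1+|x|^2)h$ (uniform in $\alpha\in\cA_h$ by (H2), obtained e.g.\ again via BDG and Gr\"onwall, and themselves requiring $\sup_n \E[|X^\alpha_{t_n}|^2] \le C(1+|x|^2)$) turn the first part into a term of order $h^2(1+|x|^2)$ per step; summing the recursion $e_{n+1} \le (1+Ch)e_n + Ch^2(1+|x|^2)$ over the $N=T/h$ steps and applying discrete Gr\"onwall gives $e_N \le C(1+|x|^2)h$, i.e.\ $\sup_n \E_{t_n,x}[|X^\alpha_{t_n}-\widetilde X^\alpha_{t_n}|^2]^{1/2} \le \widetilde C(1+|x|)h^{1/2}$, and the claimed $L^1$ bound follows by Jensen (or Cauchy–Schwarz). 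One then notes all constants depend only on $C_0$, $T$, $d$, $p$ and are in particular independent of $h$ and of the particular $\alpha\in\cA_h$.

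The main obstacle — really the only non-bookkeeping point — is to verify that the jumps of the control at the grid points do not spoil the rate: in the classical Lipschitz-in-time setting one uses $|\mu(r,\cdot,\cdot)-\mu(t_i,\cdot,\cdot)| \le C|r-t_i|$, whereas here we only have the $1/2$-H\"older bound $C_0|r-t_i|^{1/2}$; but since $r\in[t_i,t_{i+1})$ this is still $\le C_0 h^{1/2}$, so after squaring and integrating over an interval of length $h$ it contributes $O(h^2)$ per step, the same order as in the Lipschitz case, and the jump across $t_i$ is never ``seen'' within a single interval because both $X^\alpha$ and $\widetilde X^\alpha$ use the same frozen value $a_i$ there and are matched at $t_i$. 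Hence the discretization argument decouples cleanly across timesteps and the H\"older-$1/2$ time regularity is exactly enough to preserve the strong order $1/2$; the linear-in-$|x|$ growth of $\widetilde C$ is then a routine consequence of the unbounded coefficients, as flagged after \eqref{eq:est_krylov} in the excerpt.
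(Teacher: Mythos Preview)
Your proposal is correct and follows essentially the same route as the paper's proof in the appendix: write the difference at the grid points as a sum of integrals over $[t_i,t_{i+1})$, use that the control is frozen at $a_i$ on each interval so only the $1/2$-H\"older time dependence of $\mu,\sigma$ matters, bound the local increment $\E[\sup_{s\in[t_i,t_{i+1}]}|X^\alpha_s-X^\alpha_{t_i}|^2]\le C(1+|x|^2)h$ via Doob/BDG and the a priori moment bound, and conclude by discrete Gr\"onwall and H\"older's inequality. The only cosmetic difference is that you phrase the iteration as a one-step recursion $e_{n+1}\le(1+Ch)e_n+Ch^2(1+|x|^2)$ whereas the paper writes the cumulative sum directly; these are equivalent.
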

 {For completeness,  a sketch of the proof is reported in the appendix. }\\

As a consequence, denoting
$$
\widetilde v(t_n,x):=\sup_{\alpha\in\cA_{h}} \E_{t_n,x}\left[\psi(\widetilde X^{\alpha}_T)\right],
$$
for any $n=0,\ldots,N-1, x\in\R^d$, thanks to the Lipschitz continuity of $\psi$, one has 
\begin{align}
\left|v_{h} (t_n,x)- \widetilde v (t_n,x)\right| & \leq \underset{\alpha\in\mathcal A_{h}}{\sup}\left|\E_{t_n,x}\left[\psi(X^{\alpha}_T) - \psi(\widetilde X^{\alpha}_T)\right]\right|\leq L \widetilde C(1+|x|)h^{1/2}.\label{eq:EM_est}
\end{align}
Moreover, $\widetilde v$ still satisfies a DPP,
\begin{equation}\label{eq:DPP_v_M}
\widetilde v(t_n,x)= \underset{a\in A}\sup \;\E_{t_n,x}\left[\widetilde v(t_{n+1},\widetilde X^{a}_{t_{n+1}})\right],\qquad \text{ $n=0,\ldots, N-1$}.
\end{equation}

\subsection{Gau{\ss}-Hermite quadrature}
Recalling that $\Delta B_i \sim \sqrt{h} \, \mathcal N(0,I_p)$, we can also write \eqref{eq:DPP_v_M} as 
\begin{equation}\label{eq:DPP_v_M_integral}
\widetilde v(t_n,x)= \underset{a\in A}\sup \;\int_{\R^p} \widetilde v\Big(t_{n+1},x+\mu(t_n,x,a)h+\sqrt{h}\sigma(t_n,x,a) y\Big) \frac{e^{-\frac{|y|^2}{2}}}{(2\pi)^{p/2}} \, dy.
\end{equation}
\normalsize
The discrete-time scheme we are going to define is based on the Gau{\ss}-Hermite approximation of the right-hand term in \eqref{eq:DPP_v_M_integral}. 
Let us start for simplicity with the case $p=1$. \\
 {Let $M\geq 2$ and $H_{_M}$ be the Hermite polynomial of order $M$, i.e.\ 
$$ 
H_{_M}(z) = (-1)^M e^{z^2} \frac{\mathrm d^M}{\mathrm d z^{M} } e^{-z^2} = \sum^{\left \lfloor \frac{M}{2} \right \rfloor}_{k=0}(-1)^k  \frac{M!}{k! (M-2k)!} (2z)^{M-2k}
$$
(see for instance \cite[Section 7.8]{Hildebrand56} for this definition and the following results).
}
We denote by $\{z_i\}_{i=1,\ldots, M}$ the zeros of $H_{_M}$ and by $\{\omega_i\}_{i=1,\ldots ,M}$ the corresponding weights, given by
$$
\omega_i=\frac{2^{M-1} M!\sqrt{\pi}}{M^2 [H_{_{M-1}}(z_i)]^2},\qquad i=1,\ldots,M
$$
Therefore, defining 
$$
\lambda_i:=\frac{\omega_i}{\sqrt{\pi}}\quad\text{and}\quad \xi_i:=\sqrt{2}z_i, \qquad i=1,\ldots, M,
$$
for any smooth real-valued function $f$ (say $f$ at least $C^{2M}$) we can make use of the following approximation  {(see \cite[p.~395]{Hildebrand56})}:
\begin{align}\label{eq:GH_approx}
 \int^{+\infty}_{-\infty} f(y) \frac{e^{-\frac{y^2}{2}}}{\sqrt{2\pi}}dy= \int^{+\infty}_{-\infty} f(\sqrt{2}y) \frac{e^{-y^2}}{\sqrt{\pi}}dy\approx \sum^M_{i=1} \frac{1}{\sqrt{\pi}}\omega_i f(\sqrt{2}z_i)=\sum^M_{i=1} \lambda_i f(\xi_i).
 \end{align}
  {Indeed, 
 the quadrature formula in \eqref{eq:GH_approx}, is exact when the function $f$ is a polynomial of degree lower or equal to $2M-1$}.

 Now observe first that $\lambda_i\geq 0, \forall i=1,\ldots,M$.
 { Moreover, setting $f\equiv 1$ in \eqref{eq:GH_approx} and using that equality holds in this case, one gets $\sum^M_{i=1} \lambda_i=1$.}  
We can therefore define a sequence of i.i.d.\ random variables $\{\zeta_n\}_{n=0,\ldots,N-1}$ such that for any $n=0,\ldots,N-1$
$$
  \P(\zeta_n=\xi_i)=\lambda_i,\quad i=1,\dots,M.
$$
 {Using the fact that the quadrature formula integrates linear and quadratic functions exactly with respect to the  Gau{\ss}ian measure for $M\ge 2$,}
 we have
 $
 \E[\zeta_n]=0$ and $\text{Var}[\zeta_n]=1,\; \forall n=0,\ldots,N-1.
 $
 

\begin{center}
\begin{table}
\caption{Some  $\{(\xi_i,\lambda_i)\}_{i=1,\ldots,M}$ for $M=2,3,4$. We refer to \cite[p.\ 464]{Beyer87} for 
larger $M$.}\label{tab:lambda_xi}
{\begin{tabular}{|c||c|c||c||c|c||c||c|c|}
\hline
 &  $\xi_i$ & $\lambda_i $ & &  $\xi_i$ & $\lambda_i $ & &  $\xi_i$ & $\lambda_i $ \\
\hline  
\hline
$M=2$  & $\pm 1$ &  $1/2$ & $M=3$  & 0 & $2/3$ & $M=4$  & $\pm \sqrt{3-\sqrt{6}}$  &  $(3+\sqrt{6})/12$ \\
&&&& $\pm\sqrt{3}$ & $1/6$ && $\pm \sqrt{3+\sqrt{6}}$  &   $(3-\sqrt{6})/12$ \\
\hline
\end{tabular}
}
\end{table}
\end{center}

For any control $\alpha\equiv(a_n,\ldots,a_{N-1})\in\cA_{h}$, in the sequel  we will denote by $\widehat X^{t_n,x,\alpha}_\cdot$ 
 the Markov chain approximation of the process $\widetilde X^{t_n,x,\alpha}_\cdot$  recursively defined   by
 \be\label{eq:def_Xh}
 \widehat X_{t_{i+1}} =  \widehat X_{t_{i}}+\mu(t_i,\widehat X_{t_{i}},a_i)\,h+\sqrt{h}\sigma(t_i,\widehat X_{t_{i}},a_i)\,\zeta_i,  \quad\text{for $i=n,\ldots,N-1$ }
 \ee
 with   $\widehat X_{t_n}  =  x$.
Therefore, starting from \eqref{eq:DPP_v_M_integral} and applying the Gau{\ss}-Hermite quadrature formula \eqref{eq:GH_approx}, our scheme will be defined by 
 \small
 \begin{equation}
 \label{eq:SL_V}
\hspace{-0.5cm} \left\{ 
 \begin{array}{rl}
 \widehat v(t_n,x)= &\hspace{-0.3cm}\underset{a\in A}\sup\; \sum^M_{i=1}\lambda_{i} \widehat v\left(t_{n+1},x+\mu(t_n,x,a)h+\sqrt{h}\sigma(t_n,x,a)\xi_i\right) \\
  =&\hspace{-0.3cm}\underset{a\in A}\sup\;\E_{t_n,x}\left[ \widehat v(t_{n+1},\widehat X^{a}_{t_{n+1}})\right], \hfill n=N-1,\ldots,0, \\
 \widehat v(t_{_N},x)=& \hspace{-0.3cm}\psi(x).
 \end{array} 
 \right.
 \end{equation}
 \normalsize
\begin{remark}
For $M=2$, \eqref{eq:SL_V} is the SL scheme introduced by Camilli and Falcone in \cite{CF95}, for now without considering interpolation on any spatial grid.
\end{remark}
 Iterating, we obtain the following representation formula for $\widehat v$:
$$
\widehat v(t_n,x)=\underset{\alpha\in\mathcal A_{h}}{\sup}\E_{t_n,x}\left[\psi(\widehat X^{\alpha}_T)\right].
$$
\normalsize
 \subsubsection*{The rate of weak convergence}\label{sect:weak}
\noindent
In this section, we prove the rate of weak convergence of the random walk $\widehat X^{\alpha}_\cdot$ defined by \eqref{eq:def_Xh} to the process $\widetilde X^{\alpha}_\cdot$ given by the Euler-Maruyama scheme \eqref{eq:def_M}.
\begin{prop}\label{prop:weak_conv}
Let assumptions (H1)-(H2) be satisfied and let $M\geq 2$. Then there exists a constant $\widehat C\equiv \widehat C(M)\geq 0 $ such that for any function  $f\in C^{2M}(\R^d;\R)$ one has
$$
\Big|\;\E_{t_n,x}\Big[f(\widetilde X^{a}_{t_{n+1}})\Big]-\E_{t_n,x}\Big[f(\widehat X^{a}_{t_{n+1}})\Big]\;\Big|\leq \widehat C {\|D^{(2M)} f\|_{\infty}}
{( 1 + |x|^{2M})} h^{M},
$$
for any $x\in\R^d$, $a\in A$, $h\geq 0$ and $n=0,\ldots,N-1$, {and where we denoted for $k\in\mathbb N$ 
$$\|D^{(k)} f\|_\infty :=\sup_{\substack{z\in \R^d\\ \beta\in\mathbb N^d, |\beta|=k}} \left|\frac{\partial^{|\beta|} f(z)}{\partial x_1^{\beta_1}\ldots \partial x_d^{\beta_d}}\right|.$$
}
\end{prop}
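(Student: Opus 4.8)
The plan is to prove the one-step weak error estimate by a Taylor expansion of $f$ around the common starting point $x$ and a comparison of the moments of the two increments. Write $\widetilde X^a_{t_{n+1}} = x + \mu h + \sqrt{h}\sigma\,\Delta B_n/\sqrt{h}$ and $\widehat X^a_{t_{n+1}} = x + \mu h + \sqrt{h}\sigma\,\zeta_n$, where $\mu = \mu(t_n,x,a)$, $\sigma=\sigma(t_n,x,a)$, and recall $\Delta B_n/\sqrt h \sim \mathcal N(0,I_p)$ while $\zeta_n$ is the discrete random variable supported on the Gau\ss-Hermite nodes. Both increments $Z := x + \mu h + \sqrt h\,\sigma\,\eta$ (with $\eta$ either the Gaussian or $\zeta_n$) have the same first moment $x+\mu h$; the key point is that, by the exactness of the $M$-point Gau\ss-Hermite quadrature for polynomials of degree $\le 2M-1$, all mixed moments $\E[\eta_{j_1}\cdots\eta_{j_k}]$ of order $k\le 2M-1$ coincide for the two laws (in the multivariate case one tensorizes the one-dimensional rule over the $p$ coordinates, so independence of components is preserved and the moment-matching holds componentwise up to total degree $2M-1$). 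Hence when we Taylor-expand
\[
f(Z) = \sum_{|\beta|\le 2M-1} \frac{1}{\beta!} D^\beta f(x+\mu h)\,(\sqrt h\,\sigma\eta)^\beta + R_{2M},
\]
take expectations, and subtract the two expressions, every term of order $\le 2M-1$ cancels exactly, leaving only the remainder terms.

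Next I would control the remainder. The Taylor remainder $R_{2M}$ is bounded, in integral (Lagrange) form, by $C\,\|D^{(2M)}f\|_\infty\,|\sqrt h\,\sigma\eta|^{2M}$, so its expectation is bounded by $C\,\|D^{(2M)}f\|_\infty\,h^{M}\,\|\sigma\|^{2M}\,\E[|\eta|^{2M}]$. For the Gaussian, $\E[|\eta|^{2M}]$ is a finite absolute constant; for $\zeta_n$, $\E[|\zeta_n|^{2M}] = \sum_i \lambda_i |\xi_i|^{2M}$ is a finite constant depending only on $M$ (and $p$), so both remainders are $O(h^M)$ with the stated structure. Using the linear growth $\|\sigma(t_n,x,a)\| \le \|\sigma(t_n,0,a)\| + C_0|x| \le C(1+|x|)$ from (H2), we get the factor $\|\sigma\|^{2M} \le C(1+|x|)^{2M} \le C(1+|x|^{2M})$, which produces exactly the $(1+|x|^{2M})$ growth in the statement. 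One subtlety: I expanded around $x+\mu h$ rather than $x$; since $D^\beta f$ with $|\beta|=2M$ is the highest derivative appearing and $f$ is only assumed $C^{2M}$, expanding around $x+\mu h$ is cleanest because then the remainder involves only $D^{(2M)}f$ and no lower derivatives evaluated at the shifted point — the lower-order coefficients $D^\beta f(x+\mu h)$, $|\beta|\le 2M-1$, are common to both expansions and cancel before any bound on them is needed. Collecting the two remainder bounds and the triangle inequality gives
\[
\Big|\E_{t_n,x}[f(\widetilde X^a_{t_{n+1}})] - \E_{t_n,x}[f(\widehat X^a_{t_{n+1}})]\Big| \le \widehat C\,\|D^{(2M)}f\|_\infty\,(1+|x|^{2M})\,h^M,
\]
with $\widehat C$ depending only on $M$ (and $p$, $C_0$, $T$ through the bound on $\|\sigma\|$ and on $|\mu|h \le Ch$, which is $\le CT$).

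The main obstacle, and the step requiring the most care, is the moment-matching claim in the multivariate setting: verifying that the tensorized $M$-point Gau\ss-Hermite rule matches all mixed moments of the $p$-dimensional standard Gaussian up to total degree $2M-1$. Componentwise this is just the one-dimensional exactness statement already quoted after \eqref{eq:GH_approx}, and independence across components (for both $\Delta B_n$ and the tensorized $\zeta_n$) lets one factor any monomial moment $\E\big[\prod_{j=1}^p \eta_j^{k_j}\big] = \prod_{j=1}^p \E[\eta_j^{k_j}]$; each one-dimensional factor with $k_j \le 2M-1$ matches. I would state this as a short lemma. A secondary technical point is making the Taylor remainder estimate rigorous under only $C^{2M}$ regularity (no boundedness of lower derivatives): this is handled by the integral form of the remainder, $R_{2M} = 2M\sum_{|\beta|=2M}\frac{(\sqrt h \sigma\eta)^\beta}{\beta!}\int_0^1 (1-s)^{2M-1} D^\beta f(x+\mu h + s\sqrt h\sigma\eta)\,ds$, which is dominated by $\|D^{(2M)}f\|_\infty$ times a polynomial in $|\eta|$ whose expectation is finite. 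Everything else is bookkeeping of constants.
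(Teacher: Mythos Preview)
Your proposal is correct and follows essentially the same approach as the paper's proof: Taylor expand $f$ around the shifted point $z=x+\mu h$, use the exactness of the $M$-point Gau\ss--Hermite rule on polynomials of degree $\le 2M-1$ to cancel all but the order-$2M$ remainder, and then bound the remainder via $\|D^{(2M)}f\|_\infty$ together with the linear growth of $\sigma$ from (H2). The only cosmetic differences are that the paper writes the Lagrange form of the remainder (``for some $\hat z,\tilde z$'') rather than the integral form you use, and it restricts to $d=1$ for notational simplicity rather than spelling out the multivariate moment-matching argument as you do.
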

\begin{proof}
{We adapt a standard argument from numerical quadrature.}
{Let us take for simplicity $d=1$ (the case $d>1$ works in the same way)} and denote 
$
z=x+h\mu(t_n,x,a).
$
By Taylor expansion, we can write
\small
\begin{align*}
 \E_{t_n,x}\Big[f(\widetilde X^{a}_{t_{n+1}})\Big] 
& =\int^{+\infty}_{-\infty} f(z+\sqrt{2h}\sigma(t_n,x,a)y) \frac{e^{-y^2}}{\sqrt{\pi}}dy \\
& =\int^{+\infty}_{-\infty} \bigg\{\sum^{2M-1}_{k=0}\frac{f^{(k)}(z)}{k!}(\sqrt{2h}\sigma(t_n,x,a)y)^k+\frac{f^{(2M)}(\hat z)}{(2M)!}(\sqrt{2h}\sigma(t_n,x,a)y)^{2M}\bigg\} \frac{e^{-y^2}}{\sqrt{\pi}}dy,
\end{align*}
\normalsize
for some $\hat z$. In the same way we get
\small 
\begin{align*}
\E_{t_n,x}\Big[f(\widehat X^{a}_{t_{n+1}})\Big]
=\sum^M_{i=1} \frac{\omega_i}{\sqrt{\pi}} \bigg\{\sum^{2M-1}_{k=0}\frac{f^{(k)}(z)}{k!}(\sqrt{2h}\sigma(t_n,x,a) \xi_i)^{k}+\frac{f^{(2M)}(\tilde z)}{(2M)!}(\sqrt{2h}\sigma(t_n,x,a) \xi_i)^{2M}\bigg\},
\end{align*}
\normalsize
for some $\tilde z$. At this point we recall that, by construction, the Gau\ss-Hermite quadrature formula is exact for any polynomial of degree $\leq 2M-1$, so for any $k\in\{0,\dots,2M-1\}$ we have
$$
\frac{1}{\sqrt{\pi}}\frac{f^{(k)}(z)}{k!}(\sqrt{2h}\sigma(t_n,x,a))^{k}\bigg\{\int^{+\infty}_{-\infty}y^{k}{e^{-y^2}}dy-\sum^M_{i=1}\omega_i z_i^{k}\bigg\}=0.
 $$
This implies that 
\small
\begin{align*}
& \Big|\;\E_{t_n,x}\Big[f(\widetilde X^{a}_{t_{n+1}})\Big]-\E_{t_n,x}\Big[f(\widehat X^{a}_{t_{n+1}})\Big]\;\Big|\\
& \leq \Big |\int^{+\infty}_{-\infty}\frac{f^{(2M)}(\hat z)}{(2M)!}(\sqrt{2h}\sigma(t_n,x,a)y)^{2M} \frac{e^{-y^2}}{\sqrt{\pi}}dy-\sum^M_{i=1}\frac{\omega_i}{\sqrt{\pi}}\frac{f^{(2M)}(\tilde z)}{(2M)!}(\sqrt{2h}\sigma(t_n,x,a) z_i)^{2M}\Big|\\
& \leq \widehat C \|f^{(2M)}\|_{\infty} h^{M} (1+|x|^{2M}),
\end{align*}
\normalsize
  {where we have used the fact that $|\sigma(t,x,a)|\leq C_1 (1+|x|)$ for some $C_1\geq 0$. Calculating explicitly in the second line
 $\int^{+\infty}_{-\infty} 2^M y^{2M}\frac{e^{-y^2}}{\sqrt{\pi}}dy = (2M-1)!!$ and by the fact that $C_1$
 depends only on $C_0$ in (H2), the constant $\widehat C$ can be given as
$$
\widehat C = \frac{2^{2M-1} }{(2M)!} C_1^{2M}\Big( 2(2M-1)!! + \Big| (2M-1)!! - \sum^M_{i=1} \lambda_i \xi^{2M}_i\Big|\Big)
$$
}
and only depends on $M$ and the constants in assumption (H2).
 \end{proof}

\subsubsection*{Multi-dimensional Brownian motion}
In the case of $p>1$,  {it is classical (see, e.g.\ \cite[Section 5.6]{davis2007methods})} to define an approximation by a tensor product of the formula \eqref{eq:GH_approx},
that is 
\begin{align}\label{eq:GH_approx_multidim}
 \int_{\R^p} f(\sqrt{2}y) \frac{e^{-{|y|^2}}}{{\pi}^{p/2}}dy\approx \sum^M_{i_1,\ldots,i_p=1} \lambda_{i_1}\cdots\lambda_{i_p} f(\xi_{i_1},\ldots,\xi_{i_p}).
 \end{align}
Then, {denoting  for any $i\equiv(i_1,\ldots,i_p)\in \{1,\ldots, M\}^p$ the vector $\xi_i\equiv (\xi_{i_1},\ldots, \xi_{i_p})^\top\in \R^p$ and the scalar $\lambda_i=\lambda_{i_1}\cdots\lambda_{i_p}\in \R$}, one can define an approximation to $v$ by
 \small
 \begin{equation}
 \label{eq:SL_V_dim}
\hspace{-0.5cm} \left\{
 \begin{array}{rl}
 \widehat v(t_n,x)= &\hspace{-0.3cm}\underset{a\in A}\sup\; \sum_{i\in \{1\ldots M\}^p}\lambda_{i} \widehat v \Big(t_{n+1},x +\mu(t_n,x,a)h+\sqrt{h}\sigma(t_n,x,a)\; \xi_i\Big) \\
  =&\hspace{-0.3cm}\underset{a\in A}\sup\;\E_{t_n,x}\Big[ \widehat v(t_{n+1},\widehat X^{a}_{t_{n+1}})\Big], \hfill n=N-1,\ldots,0, \\
 \widehat v (t_{_N},x)=& \hspace{-0.3cm}\psi(x).
 \end{array} 
 \right.
 \end{equation}
 \normalsize
 
  {
This differs from the traditional Markov chain approximation approach taken in \cite[Section 5.3]{KD01}, where monotone finite difference schemes (i.e., those leading to positive weights)
are used to define the matrix of transition probabilities.}
 
{It is easy to observe that the construction in (\ref{eq:GH_approx_multidim}) leads to an exponential growth of the computational complexity in the dimension $p$,
as it requires at each time step and for each node the evaluation of the solution at $M^p$ points.
Retracing the proof of Proposition \ref{prop:weak_conv}, one can deduce that in order to guarantee a weak error estimates of order $h^M$, it is sufficient to find weights 
$\hat\lambda_i$ 
and nodes 
$\hat \xi _i$, $i=1,\ldots, \hat M$,
for some $\hat M\in\N$ possibly lower than $M^p$, which integrate exactly all polynomials of degree lower or equal than $2M-1$.
Moreover, the probabilistic interpretation of our scheme also requires that $\hat{\lambda}_i\geq 0$, $i=1,\ldots, \hat M$.
Such pairs $\{(\hat{\lambda}_i,\hat{\xi}_i)\}_{i=1,\ldots, \hat M}$ then have to satisfy
$$
\mathbf{A}  \lambda = \mathbf{b},\qquad \lambda \geq 0,
$$
with  $\mathbf{A}\in \R^{\ell\times M^p}$ and $\mathbf{b}\in \R^\ell$ defined by 
$$
\mathbf{A}=\left(\begin{array}{ccc}
\gamma_1(\xi_1) & \dots & \gamma_1(\xi_{M^p})\\
\vdots & \vdots & \vdots\\
\gamma_\ell(\xi_1) & \dots & \gamma_\ell(\xi_{M^p})
\end{array}
\right)\quad\text{and}\quad {\mathbf b}_i=\int_{\R^p} \gamma_i(\sqrt{2}y)\frac{e^{-|y|^2}}{\pi^{p/2}}\ dy, \quad i=1,\ldots, \ell,
$$
where $\{\gamma_1,\ldots,\gamma_\ell\}$ is a basis for the space of polynomials of degree $2M-1$ in $\R^p$ and $\ell=\binom{2M-1+p}{p}$.

The existence of a solution of the form  $\widehat\lambda=(\hat\lambda_1,\ldots, \hat{\lambda}_{\hat M}, 0,\ldots, 0 )$ for some  $\hat M\leq \ell$
follows from Tchakaloff's Theorem (see \cite{tchakaloff1957formules}, and also \cite{bayer2006proof} for a recent simpler proof).

A constructive method for independent Gau{\ss}ian random variables as in the present case is proposed in \cite{devuyst2007gaussian}, while an efficient procedure for the general, dependent case applied to the uniform measure is given in \cite{tchernychova2015caratheodory}.
This gives a substantial reduction for large $p$ and moderate $M$ in particular.
Table 4.1 in \cite{tchernychova2015caratheodory} gives numerical values for $\ell$ versus $M^p$ for $M=3$ and different $p$, such as:
$p=2$: $\ell = M^p = 9$; $p=3$: $\ell = 23, M^p = 27$; $p=5$: $\ell = 96,  M^p = 243$; $p=10$: $\ell = 891, M^p = 59049$.

 {
We end by noting that sparse grid quadrature (see, e.g.\ \cite{gerstner1998numerical}), which has been shown to overcome the curse of dimensionality for integrals of sufficiently regular functions,
is not suitable here because of the negativity of weights which is essential to their construction.
On the other hand, applying cubature on Wiener space (see \cite{lyons2004cubature}) may be a possible extension.
}

{In what follows, we will use the notation $\{({\hat \lambda}_i,{\hat \xi}_i)\}_{i=1,\ldots, \hat M}$ to generalise \eqref{eq:SL_V} to any $p\geq 1$.}

\subsubsection*{Lipschitz regularity of approximation}
We conclude this section with a regularity result for $\widehat v$. This is an important property of our scheme strongly exploited in Proposition \ref{lem:fully_est} and Section \ref{sec:lower}. 
\begin{prop}\label{prop:lip_V}
Let  (H1)-(H3) be satisfied. There exists $C\geq 0$  such that 
$$
|\widehat v(t_n,x) - \widehat v(t_n,y)|\leq LC |x-y|
$$
for any $x,y\in\R^d$ and $n=0,\ldots,N$ (where $L$ is the Lipschitz constant of $\psi$ and $C$ only depends on $T$ and the constant $C_0$ in Assumption (H2)).  
\end{prop}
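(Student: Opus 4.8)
The plan is to obtain the estimate from the representation formula $\widehat v(t_n,x)=\sup_{\alpha\in\cA_{h}}\E_{t_n,x}[\psi(\widehat X^{\alpha}_T)]$ established above (by iterating \eqref{eq:SL_V}, resp.\ \eqref{eq:SL_V_dim}), together with the Lipschitz continuity of $\psi$ from (H3). Since $|\sup_\alpha a_\alpha-\sup_\alpha b_\alpha|\le\sup_\alpha|a_\alpha-b_\alpha|$, this gives
\[
|\widehat v(t_n,x)-\widehat v(t_n,y)|\;\le\; L\,\sup_{\alpha\in\cA_{h}}\E\big[\,\big|\widehat X^{t_n,x,\alpha}_T-\widehat X^{t_n,y,\alpha}_T\big|\,\big],
\]
so the claim reduces to a stability estimate for the Markov chain \eqref{eq:def_Xh} with respect to its initial datum, uniform in $\alpha\in\cA_{h}$ and in $n$, of the form $\E[|\widehat X^{t_n,x,\alpha}_T-\widehat X^{t_n,y,\alpha}_T|]\le C|x-y|$.

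To prove this, I would fix $\alpha\equiv(a_n,\ldots,a_{N-1})\in\cA_{h}$, write $\widehat X^{x}_\cdot$ for $\widehat X^{t_n,x,\alpha}_\cdot$, and set $e_i:=\widehat X^{x}_{t_i}-\widehat X^{y}_{t_i}$, so that $e_n=x-y$ and, by \eqref{eq:def_Xh},
\[
e_{i+1}=e_i+\big(\mu(t_i,\widehat X^{x}_{t_i},a_i)-\mu(t_i,\widehat X^{y}_{t_i},a_i)\big)h+\sqrt{h}\,\big(\sigma(t_i,\widehat X^{x}_{t_i},a_i)-\sigma(t_i,\widehat X^{y}_{t_i},a_i)\big)\,\zeta_i .
\]
The key structural fact is that $\zeta_i$ is independent of $(e_i,a_i)$ and satisfies $\E[\zeta_i]=0$ and $\E[\zeta_i\zeta_i^\top]=I_p$; the latter holds also for the reduced quadrature $\{(\hat\lambda_i,\hat\xi_i)\}$, which by construction integrates exactly all polynomials of degree $\le 2M-1$, in particular the linear and quadratic ones. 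Conditioning on $\zeta_n,\ldots,\zeta_{i-1}$, the cross term vanishes, and bounding the increments of $\mu$ and $\sigma$ by $C_0|e_i|$ via (H2) yields a recursion of the form $\E[|e_{i+1}|^2]\le(1+Ch)\,\E[|e_i|^2]$, with $C$ depending only on $C_0$ (and on $T$, through $h\le T$). A discrete Gr\"onwall argument then gives $\E[|e_N|^2]\le e^{C(T-t_n)}|x-y|^2\le e^{CT}|x-y|^2$, and Jensen's inequality yields $\E[|e_N|]\le e^{CT/2}|x-y|$. Substituting into the bound above gives $|\widehat v(t_n,x)-\widehat v(t_n,y)|\le L\,e^{CT/2}|x-y|$, which is the statement with a constant depending only on $T$ and $C_0$. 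The argument can equivalently be run as a backward induction on $n$ applied directly to $\widehat v(t_n,\cdot)$, passing through the $L^2$ bound at each step by Jensen.

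I expect the main (indeed essentially the only) subtlety to be the necessity of working with second moments rather than estimating the Lipschitz constant of $\widehat v$ directly: a naive triangle inequality in $L^1$ at each time step picks up an additive term of order $\sqrt{h}$ coming from the noise $\sqrt{h}\,\sigma\,\zeta_i$, which accumulates into something that blows up over the $N=T/h$ steps. Using that $\zeta_i$ is centred with identity covariance is precisely what upgrades this $\sqrt{h}$ to an $h$ once one passes to $L^2$ and applies Jensen. The remaining ingredients are routine: compactness of $A$ in (H1) and continuity of $\mu,\sigma$ in (H2) ensure all constants are uniform over control values, and no linear-growth bound on $\mu,\sigma$ is needed since only the difference of two trajectories is being controlled.
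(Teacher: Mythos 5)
Your proof is correct and rests on exactly the same key estimate as the paper's: the one-step $L^2$ bound $\E\big[|\widehat X^{t_n,x,a}_{t_{n+1}}-\widehat X^{t_n,y,a}_{t_{n+1}}|^2\big]\le(1+Ch)|x-y|^2$, obtained from the Lipschitz continuity of $\mu,\sigma$ and the facts $\sum_i\hat\lambda_i\hat\xi_i=0$, $\sum_i\hat\lambda_i|\hat\xi_i|^2=p$, followed by iteration and Jensen. The paper phrases this as a backward induction on the Lipschitz constant $L_n$ of $\widehat v(t_n,\cdot)$ rather than via the global representation formula, but as you yourself observe these are equivalent, so the two arguments are essentially identical.
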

\begin{proof}
The result can be proved by backward induction in $n$. For $n=N$, $\widehat v(t_{_N},\cdot)$ is Lipschitz with constant $L_N:=L$ given by (H3). Let $\widehat v(t_{i},\cdot)$ be Lipschitz continuous with constant $L_{i}$ (only depending on $T$ and $C_0$ in Assumption (H2)) for any $i=n+1,\ldots,N$. By classical estimates and thanks to the definition of $(\hat\lambda_i,\hat \xi_i)$ such that $\sum^{\hat M}_{i=1} \hat{\lambda}_i\hat{\xi}_i = 0$ and $\sum^{\hat M}_{i=1} \hat{\lambda}_i|\hat{\xi}_i|^2 = p$, {one can show by a straightforward calculation that  
\begin{align*} 
&\E\left[\,\left|\widehat X^{t_n,x,a}_{t_{n+1}}-\widehat X^{t_n,y,a}_{t_{n+1}}\right|^2\,\right]
\leq \left(1+Ch \right) |x-y|^2.
\end{align*}
}
Hence, by the definition of $\widehat v$  one has
\begin{align*}
\left|\widehat v(t_n,x)-\widehat v(t_n,y)\right| \leq L_{n+1} \E\left[\,\left|\widehat X^{t_n,x,a}_{t_{n+1}}-\widehat X^{t_n,y,a}_{t_{n+1}}\right|\,\right] \leq L_{n+1} (1+Ch)^{1/2} |x-y|,
\end{align*}
where $C$ only depends on $C_0$ in (H2), which gives $
L_n\leq L_{n+1}(1+Ch)^{1/2}.
$
Iterating, one obtains
$
L_n \leq L (1+Ch)^{N/2} \leq L e^{ChN/2} \leq L e^{C T},
$
which concludes the proof.
\end{proof}
}

\subsection{The fully discrete scheme}
In order to be able to compute the numerical solution practically in reasonable complexity, {we need to introduce some sort of recombination,}
otherwise the total number of nodes of all trajectories grows exponentially in $N$. 

Let ${\Delta x\equiv(\Delta x_1,\dots,\Delta x_d)\in (\R^{>0})^d}$ and consider the space grid  $\mathcal G_{\Delta x}:=\{x_m = m\Delta x : m\in{\Z^d}\}$.
Let $\mathcal I[\cdot]$ denote the standard multilinear interpolation operator {with respect to the space variable} which 
satisfies for every Lipschitz function $\phi$ (with Lipschitz constant $L_\phi$):
\begin{subequations}
\begin{eqnarray}
&& \mathcal I[\phi](x_m)=\phi(x_m),\quad \forall m\in\Z^d, \label{eq:interp1}\\
&& |\mathcal I[\phi](x)-\phi(x)| \leq L_\phi |\dx|, \label{eq:interp2}\\
&& \text{for any functions } \phi_1,\phi_2:\R^d\conv\R,\quad \phi_1\leq \phi_2 \Rightarrow \mathcal I[\phi_1]\leq \mathcal I[\phi_2].\label{eq:interp3}
\end{eqnarray}
\end{subequations}

{
We define an approximation on this fixed grid, denoted by $\VF$, by:}
 \begin{equation}
 \label{eq:SL_V_fully}
\hspace{-0.5cm} \left\{
 \begin{array}{rl}
 \VF(t_n,x_m)= &\hspace{-0.3cm}\underset{a\in A}\sup\;\sum^{{ \hat M}}_{i=1} {\hat \lambda_i}\; \mathcal I[\VF]\Big(t_{n+1},x_m +\mu(t_n,x_m,a)h+\sqrt{h}\sigma(t_n,x_m,a)\; {\hat \xi_i}\Big),\\
 \VF(t_{_N},x_m)=& \hspace{-0.3cm}\psi(x_m),
 \end{array} 
 \right.
 \end{equation}
 for $n=N-1,\ldots,0$ and $m\in \Z^d$.
{We will refer to this as the fully discrete scheme.}
 
 From the properties of multilinear interpolation, for all $x\in \mathbb{R}^d$ there exist $q_k(x)\ge 0$, $k\in \Z^d$ with $\sum_k q_k(x)=1$ and
 $|\{k: q_k > 0\}| \le 2^p$ such that
 $
 \mathcal I[\phi](x) = \sum_{k\in\Z^d} q_k(x) \phi(x_k).
 $
Then with (\ref{eq:SL_V_fully}),
 \[
\VF(t_n,x_m)= \underset{a\in A}\sup\;\sum_{k\in\Z^d} {\hat \lambda_{m,k}(t_{n},a)}
 \; \VF(t_{n+1},x_k)
 \]
 with  $\hat \lambda_{m,k}(t_{n},a) := \sum_{i=1}^{{ \hat M}} {\hat \lambda_i}\; q_k\big(x_m +\mu(t_n,x_m,a)h+\sqrt{h}\sigma(t_n,x_m,a)\; {\hat \xi_i}\big) \ge 0$ and
$\sum_k \hat \lambda_{m,k}(t_{n},a) =1$.
  
 Therefore, $\hat \lambda_{m,k}(t_{n},a)$ are interpretable as transition probabilities of a controlled Markov chain with state space $\mathcal G_{\Delta x}$.
 The number of transitions from node $m$ is $|\{k: \hat \lambda_{m,k}(t_{n},a)>0\}| \le 2^{d} \ell$. 
 
 \begin{prop}\label{lem:fully_est}
 Let assumptions (H1)-(H3) be satisfied. Then, there exists $C\geq 0$ such that 
 $$
 \sup_{\substack{n=0,\ldots, N,\\ m\in\Z^d}} |\widehat v(t_n,x_m)-\VF(t_n,x_m)| \leq C \frac{|\Delta x|}{h}.
 $$
 \end{prop}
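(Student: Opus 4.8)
The plan is to compare the two backward recursions \eqref{eq:SL_V_dim} (for $\widehat v$) and \eqref{eq:SL_V_fully} (for $\VF$) step by step, tracking how the interpolation error $|\Delta x|$ accumulates over the $N = T/h$ timesteps. I would set
\[
e_n := \sup_{m\in\Z^d} |\widehat v(t_n,x_m) - \VF(t_n,x_m)|,
\]
and prove a recursion of the form $e_n \le e_{n+1} + L_{\widehat v}\,|\Delta x|$, with $e_N = 0$ since both schemes agree with $\psi$ on the grid at the terminal time. Iterating gives $e_0 \le N L_{\widehat v} |\Delta x| = (T/h)\, L_{\widehat v}\,|\Delta x|$, and absorbing $T L_{\widehat v}$ into the constant $C$ yields the claimed bound $C|\Delta x|/h$ (and the same bound holds for every $n$, not just $n=0$).

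The key steps, in order, are as follows. First, fix $n$ and a node $x_m$, and write both $\widehat v(t_n,x_m)$ and $\VF(t_n,x_m)$ as suprema over $a\in A$ of the corresponding weighted sums; using the elementary inequality $|\sup_a F(a) - \sup_a G(a)| \le \sup_a |F(a) - G(a)|$, reduce to bounding, for each fixed $a$,
\[
\Big| \sum_{i=1}^{\hat M} \hat\lambda_i \Big( \widehat v(t_{n+1}, y_i) - \mathcal I[\VF](t_{n+1}, y_i) \Big) \Big|,
\]
where $y_i := x_m + \mu(t_n,x_m,a)h + \sqrt{h}\,\sigma(t_n,x_m,a)\,\hat\xi_i$. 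Second, split each summand via the triangle inequality into $|\widehat v(t_{n+1},y_i) - \mathcal I[\widehat v](t_{n+1},y_i)|$ plus $|\mathcal I[\widehat v](t_{n+1},y_i) - \mathcal I[\VF](t_{n+1},y_i)|$. The first term is controlled by the interpolation estimate \eqref{eq:interp2} together with the Lipschitz regularity of $\widehat v(t_{n+1},\cdot)$ from Proposition \ref{prop:lip_V}, giving $L_{\widehat v}|\Delta x|$ with $L_{\widehat v} = LC$ independent of $h$ and $n$. The second term, since $\mathcal I$ is a convex averaging operator (the $q_k$ are nonnegative and sum to one), is bounded by $\sup_k |\widehat v(t_{n+1},x_k) - \VF(t_{n+1},x_k)| = e_{n+1}$. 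Third, sum over $i$ using $\sum_i \hat\lambda_i = 1$ and $\hat\lambda_i \ge 0$ to obtain $e_n \le e_{n+1} + L_{\widehat v}|\Delta x|$ uniformly in $m$ and $a$, then iterate backward from $n=N$.

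The main obstacle — really the only nontrivial point — is ensuring the Lipschitz constant of $\widehat v(t_{n+1},\cdot)$ used in the interpolation estimate is uniform in $n$ and independent of $h$, so that the accumulated error is genuinely $O(N|\Delta x|) = O(|\Delta x|/h)$ rather than, say, exponentially growing or $h$-dependent. This is exactly what Proposition \ref{prop:lip_V} provides ($C$ depends only on $T$ and $C_0$), so the argument goes through cleanly; one just has to be careful that the grid nodes $x_k$ appearing in $\mathcal I[\widehat v]$ and $\mathcal I[\VF]$ are the \emph{same} nodes (they are, since both interpolants use the same weights $q_k(y_i)$ determined by the geometry of $y_i$ relative to $\mathcal G_{\Delta x}$), which is what makes the difference $\mathcal I[\widehat v - \VF]$ collapse to a convex combination of grid-point differences. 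Everything else is routine bookkeeping with the triangle inequality and the normalization $\sum_i \hat\lambda_i = 1$.
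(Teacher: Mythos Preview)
Your proposal is correct and follows exactly the approach the paper indicates: the paper's proof is a one-line reference to properties \eqref{eq:interp2}--\eqref{eq:interp3} and the Lipschitz regularity of $\widehat v$ from Proposition~\ref{prop:lip_V} (deferring details to \cite[Lemma 7.1]{DJ12}), and your backward induction with the splitting $\widehat v - \mathcal I[\VF] = (\widehat v - \mathcal I[\widehat v]) + \mathcal I[\widehat v - \VF]$ is precisely the standard argument those references encode. Your careful remark that the uniform-in-$n$ Lipschitz constant from Proposition~\ref{prop:lip_V} is the only nontrivial input is on point.
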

 \begin{proof}
 The result follows by properties \eqref{eq:interp2}-\eqref{eq:interp3} and by the Lipschitz continuity of $\widehat v$ proved in Proposition \ref{prop:lip_V} (see also \cite[Lemma 7.1]{DJ12}).
 \end{proof}
Observe that, in absence of further regularity assumptions, this introduces the following ``inverse CFL condition'' for the convergence of the fully discrete scheme:
$|\Delta x|/h\to 0$ as  $|\Delta x|, h\to 0.$

\section{Error estimates}\label{sec:lower}

In order to obtain error estimates for the scheme described in Section \ref{sect:disc_scheme}, we will adapt the  technique of ``shaking coefficients'' and regularization introduced by Krylov in \cite{K97,K00} and studied later by many authors  (see for instance \cite{BJ02,BJ05,BJ07}) for obtaining the rate of convergence of monotone numerical scheme for second order HJB equations.
We do so without passing through the PDE consistency error and work instead with the direct estimates we presented in the previous section. 

We refer to Section \ref{sec:smooth_case} for a discussion of the regular case.

\subsection{Regularization}\label{sec:regulariz}
Let $\veps>0$ and  let $\cE_h$
be the set of $\R^d$-valued progressively measurable processes 
$e$ bounded by $\vare$ which are constant in each time interval $[t_i,t_{i+1}$],
that is, 
$$
  \cE_h:=\Big\{ e, \text{progr. meas.:} \forall \omega\in \Omega {\,\exists e_i\in \R^d,\,  |e_i|\leq \vare, i=0,\ldots, N-1 \text{ s.t. } e_s(\omega)= \sum^{N-1}_{i=0} e_i \mathbbm 1_{s\in [t_i,t_{i+1})}}\Big\}.
$$ 
For any pair $(\alpha,e)\in\cA_h\times \cE_h$, let us consider the process $\widetilde X^{t_n,x,\alpha,e}_\cdot$ defined by the following $\vare$-perturbation of the dynamics \eqref{eq:def_M}:
\be\label{eq:def_dyn_eps}
 \widetilde X_{t_{i+1}}= \widetilde X_{t_{i}}+\mu(t_i,\widetilde X_{t_{i}}+e_i,a_i)\,h+\sigma(t_i,\widetilde X_{t_{i}}+e_i,a_i)\,\Delta B_i,
 \ee
for $i=n,\ldots, N-1$ with $\widetilde X_{t_n} = x$. We define  the following ``perturbed'' value function: 
\be\label{eq:def_veps}
 v^\vare(t_n,x):=\underset{\alpha\in\mathcal A_{h}, e\in\cE_h}\sup\E_{t_n,x}\Big[\psi( \widetilde X^{\alpha,e}_T)\Big]\qquad\text{ $n=0,\ldots,N$, $x\in\R^d$}.
\ee 
\begin{prop}\label{prop:v-vE}
Let assumptions (H1)-(H3) be satisfied. Then there exists  a constant $C\geq 0$ such that for any $n=0,\dots,N$ and $ x,y\in\R^d$
$$
|v^\vare (t_n,x)-v^{\vare}(t_n,y)|\leq L C |x-y|
\quad \text{and}\quad
|\widetilde v(t_n,x)-v^{\vare}(t_n,x)|\leq L C\vare.
$$
\end{prop}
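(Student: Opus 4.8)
The plan is to prove the two inequalities in Proposition \ref{prop:v-vE} using essentially the same techniques that established Proposition \ref{prop:lip_V}, taking advantage of the fact that the perturbed dynamics \eqref{eq:def_dyn_eps} is again an Euler--Maruyama-type recursion with constant-over-timestep coefficients, so the existing estimates carry over after minor adjustments.

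\medskip
\textbf{Lipschitz regularity of $v^\vare$ in $x$.} I would argue by backward induction on $n$, exactly as in Proposition \ref{prop:lip_V}, but now for the perturbed process. Fix a control $\alpha\equiv(a_n,\dots,a_{N-1})\in\cA_h$ and a perturbation $e\equiv(e_n,\dots,e_{N-1})\in\cE_h$, and consider two starting points $x,y$. Since $e_i$ is added inside the arguments of $\mu$ and $\sigma$, the difference $\widetilde X^{t_n,x,\alpha,e}_{t_{i+1}}-\widetilde X^{t_n,y,\alpha,e}_{t_{i+1}}$ satisfies a recursion in which the perturbation $e_i$ enters \emph{both} terms identically; using $|\mu(t_i,u+e_i,a_i)-\mu(t_i,w+e_i,a_i)|\le C_0|u-w|$ (and likewise for $\sigma$) from (H2), the $e_i$ cancels out of the Lipschitz bound. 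One then obtains, by the standard computation (conditioning on $\mathbb F_{t_i}$, using $\E[\Delta B_i]=0$, $\E[|\Delta B_i|^2]=ph$, and the linear growth of $\sigma$ following from (H2)),
$$
\E_{t_n,x}\!\left[\,\bigl|\widetilde X^{\alpha,e}_{t_{i+1}}-\widetilde X^{\alpha,e}_{t_{i+1}}\bigr|^2\,\right]\le (1+Ch)\,\E_{t_n,x}\!\left[\,\bigl|\widetilde X^{\alpha,e}_{t_i}-\widetilde X^{\alpha,e}_{t_i}\bigr|^2\,\right],
$$
(with the obvious meaning that the two processes start at $x$ and $y$), so by iteration $\E[|\widetilde X^{\alpha,e}_T-\widetilde X^{\alpha,e}_T|^2]\le e^{CT}|x-y|^2$. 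Applying $\psi$ with its Lipschitz constant $L$, taking square roots via Jensen, and then taking the supremum over $(\alpha,e)$ on both sides gives $|v^\vare(t_n,x)-v^\vare(t_n,y)|\le LC|x-y|$ with $C$ depending only on $T$ and $C_0$.

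\medskip
\textbf{Closeness of $v^\vare$ to $\widetilde v$.} Here I would exploit the Lipschitz regularity just proved, in the spirit of Krylov's shaking-coefficients bound. Since $\cE_h$ contains the zero perturbation, $v^\vare\ge\widetilde v$ trivially. For the reverse, fix $(\alpha,e)\in\cA_h\times\cE_h$ and compare $\widetilde X^{t_n,x,\alpha,e}_\cdot$ with the unperturbed $\widetilde X^{t_n,x,\alpha}_\cdot$: their difference satisfies a recursion driven by the terms $\mu(t_i,\widetilde X_{t_i}+e_i,a_i)-\mu(t_i,\widetilde X_{t_i},a_i)$ and the analogous $\sigma$-difference, each of which is bounded in norm by $C_0|e_i|\le C_0\vare$ by (H2). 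A Gr\"onwall-type estimate on the squared difference then yields $\E_{t_n,x}[|\widetilde X^{\alpha,e}_T-\widetilde X^{\alpha}_T|^2]\le C\vare^2$ with $C$ depending only on $T$ and $C_0$; note the linear-growth terms only contribute to the multiplicative constant, not the $\vare^2$ order. Hence $|\E_{t_n,x}[\psi(\widetilde X^{\alpha,e}_T)]-\E_{t_n,x}[\psi(\widetilde X^{\alpha}_T)]|\le LC\vare$ uniformly in $(\alpha,e)$; taking the supremum over $(\alpha,e)$ and then over $\alpha$ alone on the two sides, and combining with $v^\vare\ge \widetilde v$, gives $|\widetilde v(t_n,x)-v^\vare(t_n,x)|\le LC\vare$.

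\medskip
\textbf{Expected main obstacle.} The arguments are routine Gr\"onwall/discrete-martingale estimates; the only mild subtlety is bookkeeping the suprema over the \emph{pair} $(\alpha,e)$ and making sure the bounds are uniform in $\alpha$ and $e$ before the suprema are taken, so that the estimate $|\E[\psi(\cdot)]-\E[\psi(\cdot)]|\le LC\vare$ survives the passage to $v^\vare$ and $\widetilde v$ (this uses the elementary fact that $|\sup_i f_i - \sup_i g_i|\le \sup_i|f_i-g_i|$). A secondary technical point is that the coefficients are not Lipschitz in time across timesteps because the control jumps, but this is harmless here since all the estimates above are performed timestep-by-timestep with the coefficients frozen at $t_i$, exactly as in the proof of Proposition \ref{prop:strong_conv}. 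I would keep the write-up short, essentially pointing to Proposition \ref{prop:lip_V} for the first bound and to a one-paragraph Gr\"onwall computation for the second.
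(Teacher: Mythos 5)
Your proposal is correct and follows essentially the same route as the paper: the Lipschitz bound comes from the standard Gr\"onwall-type estimate on trajectories started at $x$ and $y$ under the same pair $(\alpha,e)$ (the paper simply cites this, while you spell out the induction as in Proposition \ref{prop:lip_V}), and the $\vare$-closeness comes from the timestep-by-timestep recursion $\E[|\widetilde X^{\alpha}_{t_{i+1}}-\widetilde X^{\alpha,e}_{t_{i+1}}|^2]\le(1+Ch)\E[|\widetilde X^{\alpha}_{t_i}-\widetilde X^{\alpha,e}_{t_i}|^2]+C\vare^2 h$ followed by discrete Gr\"onwall and the $\sup$-comparison, exactly as in the paper. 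The only cosmetic issues are the typo in your first display (both processes carry the same superscripts, so the left-hand side reads as zero) and the unnecessary appeal to linear growth where only the Lipschitz constant $C_0$ from (H2) is used; neither affects the validity of the argument.
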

\begin{proof}
The Lipschitz continuity of $v^\vare$ follows by  the standard estimate
$$
\E\left[\underset{i=n,\ldots,N}\sup\left|\widetilde X^{t_n,x,\alpha,e}_{t_i}-\widetilde X^{t_n,y,\alpha,e}_{t_i}\right|\right]\leq C |x-y|, \qquad\text{$n=0,\ldots,N$, $x,y\in\R^d$}.
$$
Let us fix a control $\alpha\in\mathcal A_{h}$ and $e\in\cE_h$.
For any $i=n,\ldots,N-1$, 
By the definition of processes \eqref{eq:def_M} and \eqref{eq:def_dyn_eps} one has for any $i=n,\ldots,N-1$
\begin{align*}
\widetilde X^{t_n,x,\alpha}_{t_{i+1}}-\widetilde X^{t_n,x,\alpha,e}_{t_{i+1}} = &\;   \Big(\widetilde X^{t_n,x,\alpha}_{t_{i}}-\widetilde X^{t_n,x,\alpha,e}_{t_{i}}\Big)+h\Big(\mu(t_i,\widetilde X^{t_n,x,\alpha}_{t_{i}},a_i)-\mu(t_i,\widetilde X^{t_n,x,\alpha,e}_{t_{i}}+e_i,a_i)\Big)\\ &+\Big(\sigma(t_i,\widetilde X^{t_n,x,\alpha}_{t_{i}},a_i)-\sigma(t_i,\widetilde X^{t_n,x,\alpha,e}_{t_{i}}+e_i,a_i)\Big)\Delta B_i.
\end{align*}
{ Using the fact that $\E[\Delta B_i] = 0$ and $\E[|\Delta B_i|^2] = p h$,  and  the Lipschitz continuity of $b$ and $\sigma$ (assumption (H2)),} a  straightforward calculation shows that 
\begin{align*}
 \E_{t_n,x}\Big[\big|\widetilde X^{\alpha}_{t_{i+1}}-\widetilde X^{\alpha,e}_{t_{i+1}}\big|^2\Big]
 \leq \Big(1+ Ch\Big)\E_{t_n,x}\Big[\big|\widetilde X^{\alpha}_{t_{i}}-\widetilde X^{\alpha,e}_{t_{i}}\big|^2\Big]+\vare^2Ch,
\end{align*}
with $C$ a positive constant independent of $\alpha, e$ and $h$. 
By iteration we finally get 
\begin{align*}
\E_{t_n,x}\Big[\big|\widetilde X^\alpha_{t_{i}}-\widetilde X^{\alpha,e}_{t_{i}}\big|^2\Big]\leq  {\vare^2  \sum^{i-1}_{k=n}C h (1+Ch)^k   } \leq   \vare^2 CT e^{CT},
\end{align*}
for any $i=n,\ldots,N$ and we can conclude that there exists $C\geq 0$ such that
\[
|\widetilde v(t_n,x)-v^{\vare}(t_n,x)|\leq \underset{\alpha\in \mathcal A_{h},e\in\cE_h}{\sup} \E_{t_n,x}\left[\,\left|\psi(\widetilde X^{\alpha}_{T})-\psi(\widetilde X^{\alpha,e}_{T})\right|\,\right]\leq L C\vare.
\]
\end{proof}
{We point out that for the perturbed value function $v^\vare$ the following DPP holds}:
\begin{equation}\label{DPP_vE}
v^\vare(t_n,x)= \underset{a\in A,|e|\leq \vare}\sup \E_{t_n,x}\Big[v^\vare(t_{n+1}, \widetilde X^{a,e}_{t_{n+1}})\Big],\qquad n=0,\ldots,N-1.
\end{equation}
The step that follows consists in a regularization of the function $v^\vare$. 
We  consider a smooth function  
$\delta:\R^d\rightarrow [0,+\infty)$ supported in the unit ball $B_1(0)$ with $\int_{\R^d}\delta(x)\,dx=~1$, and 
 we define $\{\delta_\veps\}_{\vare>0}$ as the 
 sequence of mollifiers
$
\delta_\vare(x):=\vare^{-d}\delta\left(x/\vare\right).
$
Then define, for any $n=0,\ldots,N$, 
\be\label{def:ve}
v_{\vare}(t_n,x):=\int_{\R^d} v^{\vare}(t_n,x-\xi)\delta_\vare(\xi)\mathrm{d}\xi.
\ee

\begin{prop}\label{prop:DPP_ve}
Let assumptions (H1)-(H3) be satisfied. Then,
\begin{enumerate}
\item[(i)] there exists  $C\geq 0$ such that 
$$
\big|v_\vare(t_n,x)-v^\vare(t_n,x)\big|\leq L C\vare\qquad n=0,\ldots,N,\, x\in\R^d;
$$
\item[(ii)] the function $v_\vare(t_n,\cdot)$  is $C^\infty$ for $n=0,\ldots,N$ and for any $k\geq 1$ there is $C\geq 0$ such that
\be\label{eq:est_deriv}
{
\underset{n=0,\ldots,N}\sup\big\|D^{(k)} v_\vare(t_n,\cdot)\big\|_\infty \leq L C \varepsilon^{1-k};}
\ee
\item[(iii)] $v_\vare$ satisfies the following super-dynamic programming principle 
\begin{equation}\label{DPP_ve}
v_\vare(t_n,x)\geq  \underset{a\in A}\sup\; \E_{t_n,x}\Big[v_\vare(t_{n+1}, \widetilde X^{a}_{t_{n+1}})\Big],\qquad n=0,\dots,N-1,\, x\in\R^d.
\end{equation}
\end{enumerate}
\end{prop}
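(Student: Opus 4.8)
The plan is to prove the three claims of Proposition \ref{prop:DPP_ve} in turn, the first two being routine consequences of mollification and the Lipschitz bounds already established, with the third (the super-DPP) being the substantive point.

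For (i), I would write $v_\vare(t_n,x)-v^\vare(t_n,x)=\int_{\R^d}\big(v^\vare(t_n,x-\xi)-v^\vare(t_n,x)\big)\delta_\vare(\xi)\,\mathrm d\xi$, use the Lipschitz continuity of $v^\vare(t_n,\cdot)$ from Proposition \ref{prop:v-vE} with constant $LC$, and the fact that $\delta_\vare$ is supported in $B_\vare(0)$ and integrates to $1$, so that $|v_\vare(t_n,x)-v^\vare(t_n,x)|\le LC\int_{B_\vare(0)}|\xi|\delta_\vare(\xi)\,\mathrm d\xi\le LC\vare$. For (ii), smoothness follows because $v_\vare(t_n,\cdot)=v^\vare(t_n,\cdot)*\delta_\vare$ with $\delta_\vare\in C^\infty_c$; differentiating under the integral, $D^{(k)}v_\vare(t_n,x)=\int v^\vare(t_n,x-\xi)D^{(k)}\delta_\vare(\xi)\,\mathrm d\xi$, and since $D^{(k)}\delta_\vare$ scales like $\vare^{-d-k}$ with $\int|D^{(k)}\delta_\vare|\le C\vare^{-k}$, one gains a factor $\vare^{-k}$; the remaining factor $\vare^1$ (to get $\vare^{1-k}$ rather than just $\vare^{-k}$) comes from first subtracting the value at a fixed point and using the Lipschitz bound $|v^\vare(t_n,x-\xi)-v^\vare(t_n,x)|\le LC\vare$ on the support, i.e.\ writing $D^{(k)}v_\vare(t_n,x)=\int\big(v^\vare(t_n,x-\xi)-v^\vare(t_n,x)\big)D^{(k)}\delta_\vare(\xi)\,\mathrm d\xi$ for $k\ge1$ using $\int D^{(k)}\delta_\vare=0$.

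For (iii) I would fix $n$, $x$, and $a\in A$, and exploit the DPP \eqref{DPP_vE} for $v^\vare$, which allows an \emph{extra} perturbation $e$ with $|e|\le\vare$ in the inner dynamics. The idea is to absorb the mollification shift $\xi$ (with $|\xi|\le\vare$) into such a perturbation. Concretely, start from $v_\vare(t_n,x)=\int v^\vare(t_n,x-\xi)\delta_\vare(\xi)\,\mathrm d\xi$ and apply \eqref{DPP_vE} at the point $x-\xi$ with the constant control value $a$ and the constant perturbation $e_i\equiv\xi$: this gives $v^\vare(t_n,x-\xi)\ge\E_{t_n,x-\xi}\big[v^\vare(t_{n+1},\widetilde X^{a,\xi}_{t_{n+1}})\big]$, where by \eqref{eq:def_dyn_eps} the one-step map from $x-\xi$ with shift $\xi$ is $\widetilde X^{a,\xi}_{t_{n+1}}=(x-\xi)+\mu(t_n,x,a)h+\sigma(t_n,x,a)\Delta B_n$, i.e.\ exactly $\widetilde X^{t_n,x,a}_{t_{n+1}}-\xi$ with $\widetilde X^{t_n,x,a}$ the \emph{unperturbed} one-step Euler--Maruyama process of \eqref{eq:def_M}. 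Hence $v^\vare(t_n,x-\xi)\ge\E_{t_n,x}\big[v^\vare(t_{n+1},\widetilde X^{a}_{t_{n+1}}-\xi)\big]$. Integrating against $\delta_\vare(\xi)\,\mathrm d\xi$, using Fubini to swap the expectation and the $\xi$-integral, and recognising $\int v^\vare(t_{n+1},y-\xi)\delta_\vare(\xi)\,\mathrm d\xi=v_\vare(t_{n+1},y)$, yields $v_\vare(t_n,x)\ge\E_{t_n,x}\big[v_\vare(t_{n+1},\widetilde X^{a}_{t_{n+1}})\big]$; taking the supremum over $a\in A$ gives \eqref{DPP_ve}.

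The main obstacle is the bookkeeping in step (iii): one must check that the constant-in-time perturbation $e\equiv\xi$ is admissible (it is, since $|\xi|\le\vare$ on the support of $\delta_\vare$ and constants lie in $\cE_h$), that the shift enters the dynamics \eqref{eq:def_dyn_eps} precisely so that the drift and diffusion are evaluated at $x$ rather than $x-\xi$ (this is why the perturbation is placed inside $\mu,\sigma$), and that Fubini applies — which is justified by the linear growth of $v^\vare$ (a consequence of its Lipschitz continuity) and the finite second moments of $\widetilde X^{a}_{t_{n+1}}$. Everything else is a direct computation, and I would keep it brief in the write-up. A remark worth flagging is that only a super-DPP (inequality) is obtained, because the mollification is an averaging operation and one has used the DPP for $v^\vare$ in only one direction; the reverse inequality would require moving the supremum over $e$ outside the $\xi$-integral, which fails.
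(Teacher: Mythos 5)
Your proposal is correct and follows essentially the same route as the paper: parts (i) and (ii) are the standard mollifier estimates combined with the Lipschitz bound of Proposition \ref{prop:v-vE}, and part (iii) rests on exactly the identity $\widetilde X^{t_n,x-\xi,a,\xi}_{t_{n+1}}=\widetilde X^{t_n,x,a}_{t_{n+1}}-\xi$ used in the paper, i.e.\ absorbing the mollification shift into an admissible constant perturbation $e\equiv\xi$ and then exchanging the $\xi$-integral with the expectation and the supremum. Your closing remark on why only a one-sided (super-)DPP survives the averaging matches the paper's use of this fact in Section \ref{subsect:lower}.
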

\begin{proof}
Properties $(i)$-$(ii)$ follow by the properties of mollifiers and the Lipschitz continuity of $v^\vare$ (Proposition \ref{prop:v-vE}).
It remains to prove $(iii)$. By the definition of $v_\vare$, equality \eqref{DPP_vE} and using the fact that for any $a\in A$, $\xi\in B_\varepsilon(0)$, $n=0,\ldots,N-1$, one has
$
\widetilde X^{t_n,x-\xi,a,\xi}_{t_{n+1}}= \widetilde  X^{t_n,x,a}_{t_{n+1}}-\xi$, 
one obtains
\begin{align*}
v_\vare(t_n,x) 
  \geq \int_{\R^d}\;\underset{a\in A}\sup\;\E_{t_n,x-\xi}\Big[v^\vare(t_{n+1}, \widetilde X^{a,\xi}_{t_{n+1}})\Big]\delta_\vare(\xi)d\xi
  \geq  \underset{a\in A}\sup\, \E_{t_n,x}\Big[\int_{\R^d}v^\vare(t_{n+1}, \widetilde X^{a}_{t_{n+1}}-\xi)\delta_\vare(\xi)d\xi\Big],
\end{align*}
which concludes the proof.
\end{proof}

\subsection{{Improved lower bound}}\label{subsect:lower}
Applying \eqref{eq:EM_est}, Proposition \ref{prop:v-vE} and Proposition \ref{prop:DPP_ve}$(i)$, we obtain
\begin{align}\label{eq:lower_first_passage}
& v(t_n,x)\geq
v_\vare(t_n,x)-L \widetilde C(1+|x|)h^{1/2}-L C\vare,
\end{align}
\normalsize
for some new $C\geq 0$.
Moreover, by Proposition  \ref{prop:DPP_ve} ($(ii)$ and $(iii)$) and Proposition \ref{prop:weak_conv} we also have
\begin{align*}
\widehat v(t_n,x)-v_\vare(t_n,x)
 \leq \underset{a\in A}{\sup}\,\E_{t_n,x}\Big[\widehat  v(t_{n+1},\widehat X^{a}_{t_{n+1}})-v_\vare(t_{n+1},\widehat X^{a}_{t_{n+1}})\Big]+L C(1\!+\!|x|^{2M})\vare^{1-2M} h^{M}. 
\end{align*}
We can then iterate this inequality to  get 
\begin{align}\label{eq:loweb}
 \widehat v (t_n,x)-v_\vare(t_n,x)&\leq \|\widehat v(t_{N},\cdot)-v_\vare(t_{N},\cdot)\|_\infty+L C(1+|x|^{2M})\vare^{1-2M} h^{M-1}, 
\end{align}
where we have used that for some $C\geq 0$
$$
 \sup_{\alpha\in \mathcal A_h}\E_{t_n,x}\Big[\underset{i=n,\ldots,N}\sup \big|\widehat X^{\alpha}_{t_i}\big|^{2M}\Big]\leq C \big(1+|x|^{2M}\big).
$$
}
Hence, combining \eqref{eq:loweb} and   \eqref{eq:lower_first_passage}, we can conclude that for any $n=0,\ldots,N, x\in\R^d$
\begin{align*}
v(t_n,x)\geq \widehat v(t_n,x)-L C (1+|x|^{2M})\Big(\vare^{1-2M} h^{M-1}+h^{1/2}+\vare\Big).
\end{align*}
{Balancing the terms with $\vare$ and $h$, i.e. taking $\vare=h^{(M-1)/2M}$, by $1/2> (M-1)/2M$ one has
\begin{align}\label{eq:lowerSemi}
v(t_n,x)\geq \widehat v(t_n,x)-L C (1+|x|^{2M})h^{(M-1)/2M}.
\end{align} }
To conclude,  the interpolation error has to be added giving an overall error of 
$$
O\Big(h^{(M-1)/2M} + \frac{|\Delta x|}{h}\Big).
$$
Optimising the choice of $\Delta x$ with respect to $h$ we get
$
|\Delta x| \sim h^{(3M-1)/2M}.
$
This effectively leads to order $(M-1)/(2M)$ in time and $(M-1)/(3M-1)$ in space, which can be made arbitrarily close to $1/2$ and $1/3$,
respectively, by choosing $M$ large enough.

\begin{remark}[Comparison with existing results]\label{rem:DJ}
By a Taylor expansion it is possible to compute the consistency error of the scheme with respect to the associated HJB equation. Considering, for simplicity, the uncontrolled {one-dimensional ($p=d=1$)}  case with $\mu\equiv 0$,
using the fact that  $\sum_{i=1}^{M} \lambda_i =1$, $\sum_{i=1}^{M} \lambda_i \xi_i^2 =1$
 and  $\sum_{i=1}^{M} \lambda_i \xi_i^{2k+1} =0\ (\forall k\in \N)$, one gets 
\begin{align*}
& \frac{1}{h} \Big(v(t_{n+1},x) - \sum_{i=1}^M \lambda_iv(t_{n},x + \sqrt{h}\sigma(t_n,x)\xi_i) \Big)\\
& =   v_t(t_n,x) - \frac{1}{2}(\sigma(t_n,x))^2 v_{xx}(t_n,x) +  \frac{h}{2} v_{tt}(t_n,x)  -\frac{h}{4!}(\sigma(t_n,x))^4 v_{4x}(t_n,x)\sum_{i=1}^M \lambda_i \xi_i^4  
 +O(h^{2}),
\end{align*}
which shows that the scheme has order $1$ consistency, for all $M$.
Applying the results in \cite{DJ12}, this would lead to error estimates of order $h^{1/4}+\Delta x/h$, 
i.e., with the optimal choice of $\Delta x$
order $1/4$ in $h$ and $1/5$ in $\Delta x$. 
A similar limitation applies to the analysis in \cite{Kry99}.

The improvement we get for the lower bound is due to the fact that, splitting the two contributions of the error coming from Euler-Maruyama time stepping and the Gau\ss-Hermite quadrature formula, we can reduce the second one by increasing $M$, whereas for the first one the lower regularity requirement allows us to get order $1/2$.
\end{remark}

\subsection{{Upper bound}}\label{subsect:upper}
{
The first important observation is that the estimates based on the convexity of the supremum operator (Proposition \ref{prop:DPP_ve}($iii$)) work only in the direction of the lower bound. 
Due to the regularity of the numerical solution (see Proposition \ref{prop:lip_V}), we can apply the approach from \cite{BJ02, K97, K00}
to reverse the role of numerical and exact solution and exploit the same arguments by regularization of $\widehat v$.
However, to estimate the error introduced by the piecewise approximation of the controls we rely on  \eqref{eq:est_krylov}. This  restricts the convergence rate to order 1/4 in $h$ and $1/5$ in $\Delta x$ and hence it will not lead to an improvement with respect to the rates in \cite{DJ12} even for large $M$. }

\section{The regular case and improvements}\label{sec:smooth_case}

\subsection{The regular case}
If the value function $v$ can be shown to be sufficiently smooth, the regularization step is not necessary and {it is also possible
to consider the rate of  weak convergence of the Euler-Maruyama scheme, which is one, and under differentiability assumptions on $\psi$ this gives
$$
\underset{\alpha\in \mathcal A_h}\sup \Big|\E_{t_n,x}\big[\psi(X^\alpha_T) - \psi(\widetilde X^{\alpha}_T)\big]\Big| \leq  C h.
$$
}
Thus, we obtain the following lower estimate  
\be\label{eq:semi_reg}
v(t_n,x)\geq \widehat v(t_n,x)+  C (1+|x|^{2M}) h^{M-1}+  C h,
\ee
which is of order 1 as we would expect in the regular case.
{For  sufficiently smooth functions, the interpolation error reduces to $|\Delta x^2|/h$}. This, together with \eqref{eq:semi_reg}, gives estimates for the lower bound of order $O(h + \frac{|\Delta x|^2}{h})$. %
It is also shown in \cite{JakoPicaReisi18} that $0\leq v-v_h\leq C h$ holds if $v_h$ is sufficiently smooth. For $|\Delta x| \sim h$ this leads to error estimates of  order 1. 
In many cases, this corresponds to the practically observed situation so  that choosing $|\Delta x| \sim h$ is sufficient to observe convergence, with order 1, of the fully discrete scheme.

\subsection{Higher order time stepping}
In the smooth case it can also be beneficial to consider higher order approximation schemes for the stochastic differential equation (in the non-smooth case, the necessity of heavier regularization neutralizes the improvements from the higher order schemes). For instance, in the case of coefficients independent of time, one could adapt  the weak-second order Taylor scheme (see \cite{KloPla})
\begin{align*}
X^{{t_n},x}_{t_{n+1}} = & x +\mu(x)h + \Big( -\frac{1}{2}\sigma\sigma_x(x) +\frac{1}{2}\mu\mu_x(x)+\frac{1}{4}\mu_{xx}\sigma^2(x)\Big) h^2 + \sigma(x)\Delta B_i\\
&  +\frac{1}{2}\sigma\sigma_x(x) \Delta B_i^2 +\Big(\frac{1}{2}\mu_x\sigma(x) +\frac{1}{2}\mu\sigma_x(x) +\frac{1}{4}\sigma_{xx}\sigma^2(x)\Big)h\Delta B_i
\end{align*}
to the controlled equation \eqref{SDE}
and obtain an error contribution of order $h^2$ from the time stepping scheme for the semi-discrete approximation.
{Retracing the steps of the proof of Proposition \ref{prop:weak_conv}, $M\ge 2$ is still sufficient to guarantee order 2 for the Gau{\ss}-Hermite approximation, as the higher order terms resulting from
$B_i^2$ are integrated sufficiently accurately.}
The overall lower bound of the error for the fully discrete scheme would be $O(h^2+ |\Delta x|^2/h)$, which leads to order $2$ in $h$ and $4/3$ in $|\Delta x|$.
However, no improvement of the upper bound is guaranteed due again to the control approximation, which, as explained in \cite{JakoPicaReisi18}, can be improved only if $v_h$ is smooth too, which is usually not the case even if $v$ is.

\subsection{Higher order interpolation}
\label{sec:interp}

A remaining bottleneck is the accumulated interpolation error $|\Delta x|^2/h$, which is dictated by the need for (multi-)linear interpolation to ensure  the monotonicity of the scheme.
Some recent results (see \cite{reisinger2016piecewise}) indicate that monotonicity of the interpolation step is not needed to ensure convergence of the scheme, as long as the interpolation is ``limited'' to avoid overshoots. An interesting example is 
the monotonicity preserving cubic interpolation (see \cite{fritsch1980monotone}, and \cite[Section 6]{DJ12} for an application to semi-Lagrangian schemes) which preserves the monotonicity of the input data in intervals where the data are monotone, and is of high order if the data are monotone overall. In special cases where the monotonicity of the value function is known a priori (such as typical utility maximisation problems in finance), this may lead to a practical improvement of the order,
{  as evidenced in our numerical tests}, although a theoretical proof of the higher order seems difficult.

{ 
\section{Numerical tests}\label{sec:num}

In what follows, as application of our method and to test its numerical properties,
we consider a problem from mathematical finance which consists in pricing a European option under a Black-Scholes model with unequal lending and borrowing rates.
This model, originally proposed in \cite{bergman1995option} and analysed analytically  in \cite{amadori2003nonlinear} as special case of the framework studied therein, has frequently been used as a test case for numerical methods in the literature
(see, e.g., \cite{forsyth2007numerical, witte2011penalty} for the solution of HJB PDEs by discretisation and penalisation, respectively, \cite{gobet2005regression,bender2007forward,bender2017primal}
for regression-based BSDE methods, and \cite{weinan2017deep} for a deep learning method for the PDE solution).

The relaxation of the assumption of a single funding rate has recently attracted renewed attention in the financial industry in the context of collateralization (see, e.g., \cite{mercurio2015bergman}).

The market frictions introduce a nonlinearity of the pricing rule with respect to the payoff and an asymmetry between long and short positions in the option.
We focus here on the latter.
Appendix A.1 in \cite{forsyth2007numerical} gives a derivation -- by a hedging argument -- of
the following HJB equation for the value $u(s,t)$ of the option at time $t$ given a value of the underlying asset of $s$,
$$
\partial_t u + \frac12 \sigma^2 s^2 \partial_{ss} u +\underset{q\in \{r_b, r_l\}}\sup \Big\{ q (s \partial_s u - u)\Big\}=0,
$$
considered with a terminal condition $u(T,s) = \psi(s)$, where $\psi$ is the payoff function of the option at maturity $T$,
$r_b$ and $r_l$ are the borrowing and lending rates, respectively, and $\sigma$ the volatility.
With the change of variable $x=\log(s)$ one can pass to the function $v(t,x):= u(t,{e}^x)$ as solution of
\begin{eqnarray}
\label{hjb-rates-x}
\partial_t v + \frac12 \sigma^2  \partial_{xx} v -\frac12 \sigma^2 \partial_x v +\underset{q\in \{r_b, r_l\}}\sup \Big\{ q ( \partial_x v - v)\Big\}=0.
\end{eqnarray}
We note that this PDE is semi-linear, with a nonlinearity in the first and zero order terms. In the situation of a fixed, globally constant optimiser $q$, the equation has constant coefficients.

The option price can be interpreted as the value function of a control problem as follows,
$$
v(t,x) = \sup_{\zeta \in \mathcal Q} \mathbb E_{t,x}\left[ {e}^{-\int_t^T \zeta_r \, \mathrm d r } \psi(\exp(X^\zeta_T))\right],
$$
with the set $\mathcal Q$ of progressively measurable processes with values in $\{r_b, r_l\}$ 
and $X^\zeta_T= x +\int^T_t (\zeta_r-\sigma^2/2) \, \mathrm d r  + \int^T_t \sigma \, \mathrm d B_r$.

As the volatility is constant, independent of the control process, this is an ideal test for our method as it singles out the error from the Markov chain approximation and allows us to assess the improvement achieved by a higher order quadrature rule. The optimal feedback control is piecewise constant as a function of both time and space, with a small number of jump points for practically relevant payoffs.


This fits into the previous framework with the minor extension of a controlled discount factor. For the value function $v_h$ of the problem with piecewise constant control processes we have the following 
DPP:
\begin{equation}\label{eq:DPP_v_rate}
v_h(t,x)= \underset{q\in \{r_b,r_l\}}\sup \;\mathbb E_{t,x}\left[{e}^{-q h} v(t+h,X^{q}_{t+h})\right],
\end{equation}
where $X^{t,x,q}_{t+h}= x +  (q - \frac{\sigma^2}{2})h + \sigma (B_{t+h} - B_t)$.
Therefore, adapting to the present case the definitions given in Section \ref{sect:disc_scheme}, in particular \eqref{eq:SL_V_fully}, we can define a fully discrete scheme by 
$$
V(t_n,x_m) = \sup_{q\in \{r_b,r_l\}} \sum^{{M}}_{i=1} e^{-q h}  { \lambda_i}\; \mathcal I[V]\Big(t_{n+1},x_m + (q- \frac{\sigma^2}{2} ) h+\sqrt{h}\sigma \; {\xi_i}\Big)
$$
with $n=0,\ldots , N-1$ and $m = 0,\ldots, J$.

We consider as terminal conditions a  a call payoff
$$
\psi(s) = (s-K)^+
$$
and a so-called butterfly payoff 
$$
\psi(s) = 0.25 \left( (s - K1)^+ - 2 (s-0.5(K1 +K2))^+ + (s-K2)^+ \right).
$$
The parameters used are given in Table \ref{tab:data}.
\begin{table}[!hbtp]
\centering
\begin{tabular}{|c|c|c|c|c|c|c|}
\hline 
$r_l$ & $r_b$  & $\sigma$ &  $K_1$ & $K_2$ & $K$ & $T$\\
\hline
$0.1$ & $0.15$ & $0.4$  & $100$ & $300$ & $100$ & $1$\\
\hline
\end{tabular}
\caption{Parameters used in numerical experiments (same as in \cite{witte2011penalty} with $r_f=0$).}
\label{tab:data}
\end{table}

The call payoff and numerical solution for the value function are given in Figure \ref{fig:call}, left. The butterfly payoff is shown in Figure \ref{fig:call}, right, while the
numerical approximation to the value function is shown in Figure \ref{fig:butt}, left, blue solid curve. 
 {
In the call case, the optimal control is constant at $q={r_b}$.
For the butterfly, in addition to ${(r_l,r_b)}=(0.1,0.15)$, as used elsewhere throughout the paper, we also plot the value functions for
$(r_l,r_b)=(0.1,0.1)$  (dash-dotted magenta curve) and 
$(r_l,r_b)=(0.15,0.15)$  (dashed black curve), which are seen to be strictly smaller. This demonstrates that the constant strategies are sub-optimal.}

The focus of our tests is to establish whether
 a higher order version of the semi-Lagrangian scheme with $M>2$, in particular the case $M=4$, leads to better accuracy and higher convergence rate than the standard case $M=2$.
To this end, we compute numerical approximations with an increasing number of timesteps, $N$. 

We note that the Euler scheme for fixed control is exact in this setting, and the time discretisation error therefore of order $h^3$ for $M=4$, provided smooth enough solutions. 
In order to balance this term and the interpolation error of order $(\Delta x)^2/h$, we choose $\Delta x \sim h^2$ (an `inverse CFL condition') or $J\sim N^2$. In the case of $M=2$, this makes the spatial error negligible compared to the time stepping error of order $h$.

In Table \ref{tab:call}, we list the error evaluated in the maximum norm over a suitable spatial interval 
and the resulting estimated convergence order, for the call payoff (observe that in this case the problem becomes linear with $q={r_b}$ and an exact solution is available). 
In this case, as the optimal control is constant, this allows us to verify the achievable order in the simplest case of a linear problem with constant coefficients. Indeed, the order is approximately 1 for $M=2$ and 3 for $M=4$.

\begin{figure}
\includegraphics[width=0.495\textwidth]{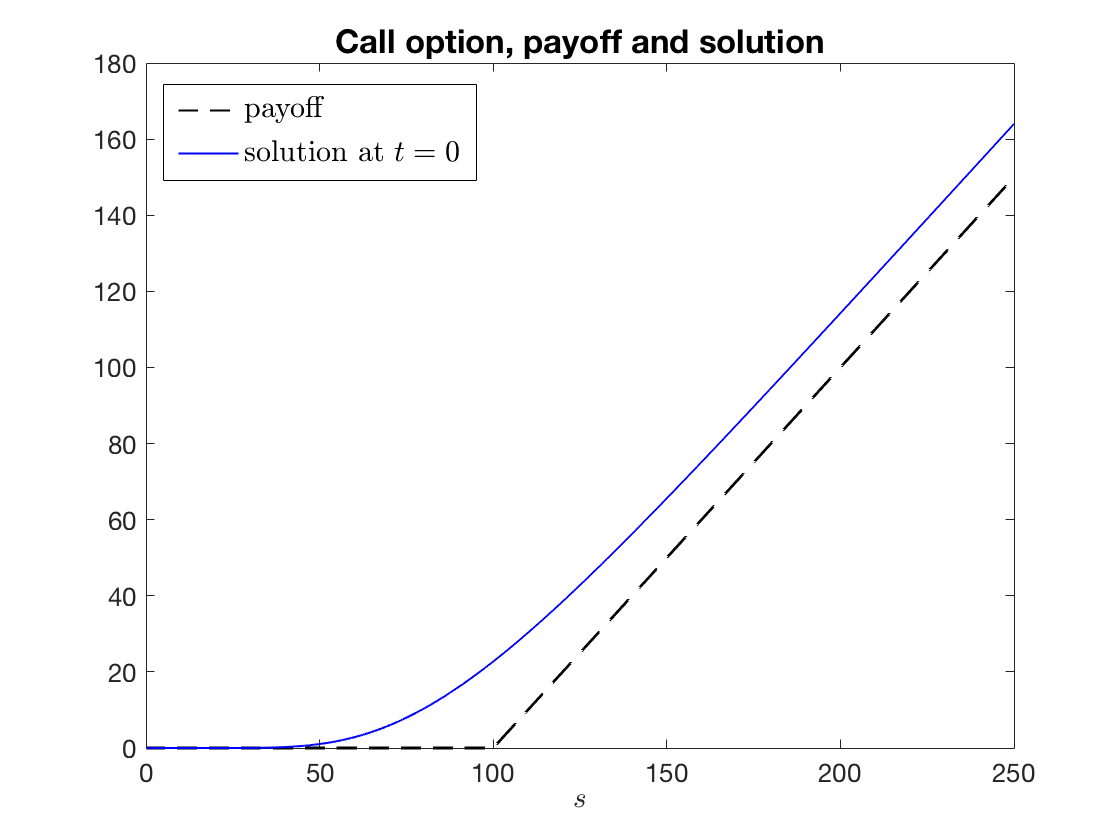} \hfill
\includegraphics[width=0.495\textwidth]{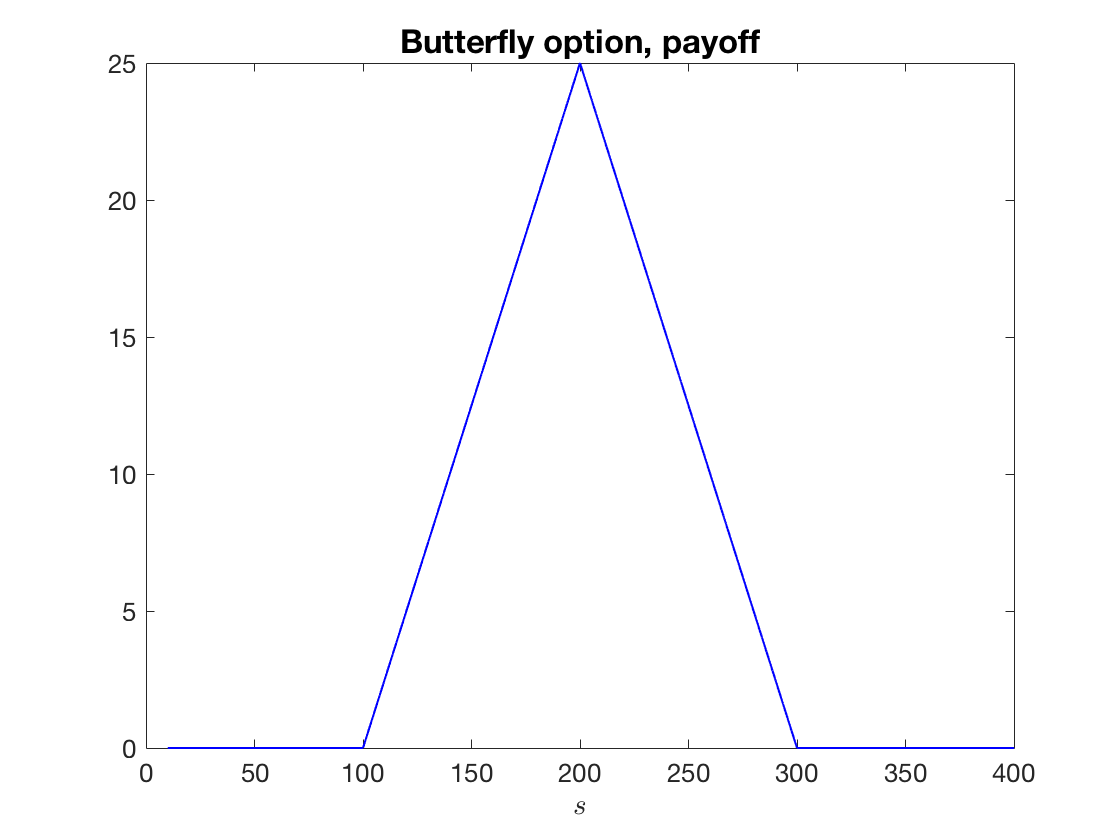}
 \caption{Left: call option payoff (dashed black line) and solution at $t=0$ (solid blue line). Right: butterfly option payoff. }
 \label{fig:call}
\end{figure}

\begin{table}[h]
\begin{tabular}{|c|ccc|ccc|}
\hline
&\multicolumn{3}{c|}{$M=2$} & \multicolumn{3}{c|}{$M=4$}\\
\hline
$k$ & error & order  & CPU (s) & error & order & CPU (s)  \\
\hline
1 & 2.25E-01 & - & 0.02 &   5.99E-01 & - & 0.04\\
   2 & 5.97E-02 &  1.91 &  0.03 & 6.94E-02 &  3.11 & 0.08\\ 
    3 & 2.64E-02 &  1.18  &  0.13 & 9.80E-03 &  2.82 & 0.47 \\ 
    4 & 1.20E-02 &  1.13  &  0.92 & 2.63E-03 &  1.90 & 1.54\\ 
    5 & 6.06E-03 &  0.99  & 3.85 & 3.48E-04 &  2.92 & 8.10\\ 
    6 & 3.28E-03 &  0.88  & 26.00 & 3.00E-05 &  3.54  & 59.58\\ 
    7 & 1.75E-03 &  0.91  & 276.01 & 2.91E-06 &  3.36 & 613.85\\ 
   8 & 9.02E-04 &  0.95  & 2382.37 & 4.40E-07 &  2.73 & 4834.02\\ 
\hline 
  \end{tabular}
\caption{\label{tab:call} Call option. Refinements $N = 2^4\cdot 2^{k}$ and $J = N^2/4 $ for $k=1,\ldots,8$. Local error computed for $s \in [70, 90]$. }
\end{table}

\begin{figure}
\includegraphics[width=0.495\textwidth]{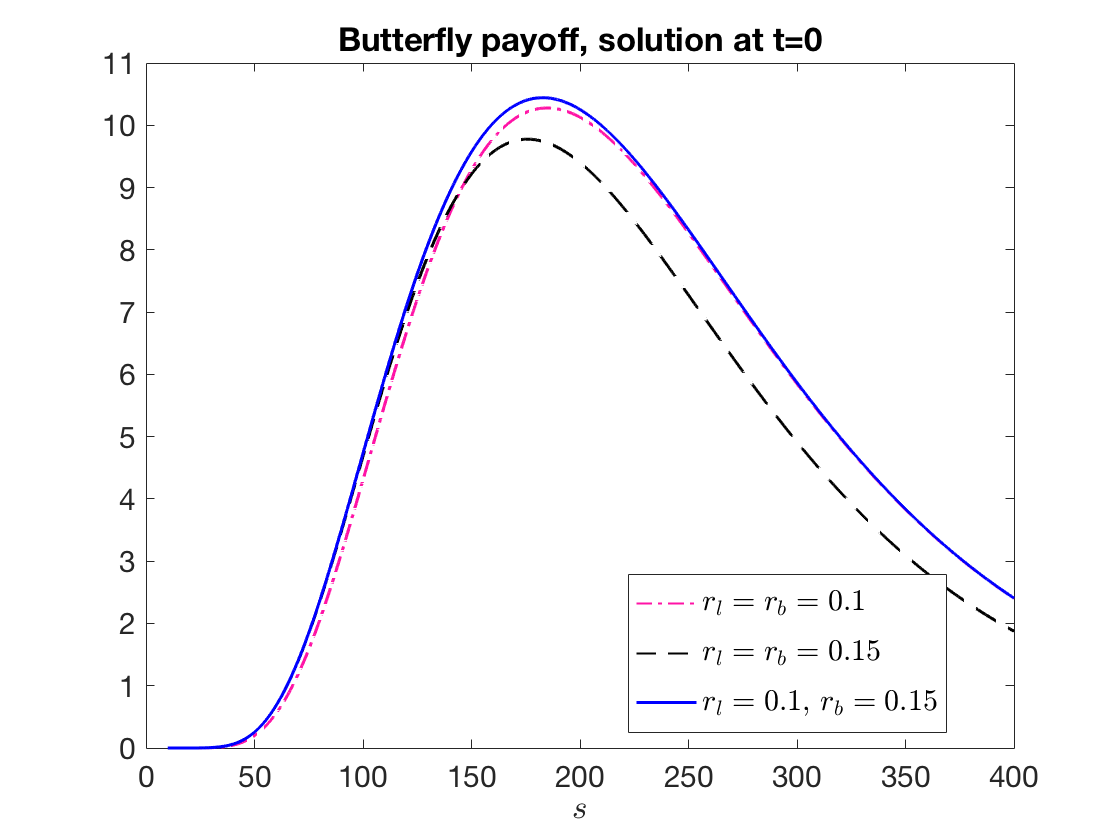}\hfill
\includegraphics[width=0.495\textwidth]{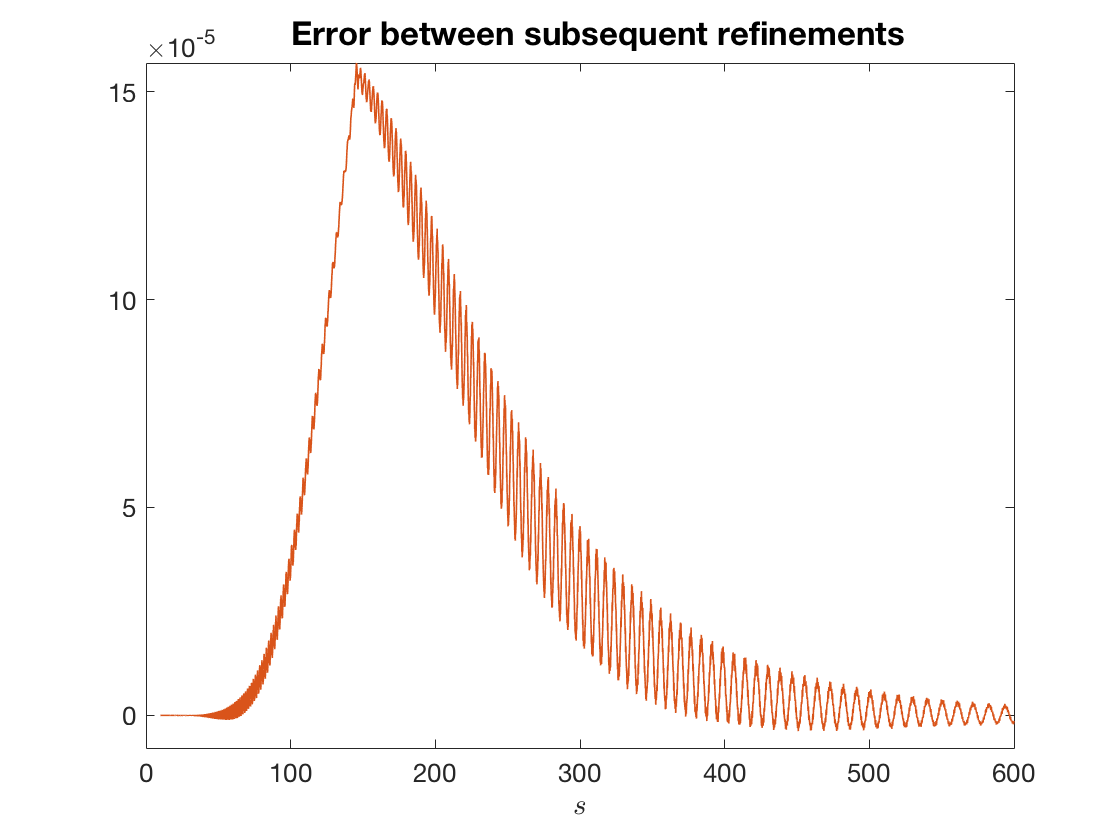}
 \caption{Butterfly option. Left: solution at $t=0$ (solid blue curve), compared with the solution of the linear problems $r_l=r_b=0.1$ (dash-dotted magenta curve) and $r_l=r_b=0.15$ (dashed black curve). Right: difference between solutions at $t=0$ computed with two subsequent mesh refinements ($k=7$ and $k=8$, corresponding to last line in Table \ref{tab:butt}).}
 \label{fig:butt}
\end{figure}

The corresponding results for the butterfly payoff are given in Table \ref{tab:butt} (in this case, we do not have an exact  solution and the error is considered as the difference between numerical solutions for subsequent mesh refinements).
A typical shape of the error as a function of $s$ (at $t=0$) is shown in Figure \ref{fig:butt}, right.
Here, the optimal control is piecewise constant in space, switching from $r_b$ to $r_l$ at a time-dependent value of $s$, between 150 and 200 in this case.
This results in a `kink' in the controlled term in \eqref{hjb-rates-x}, and therefore the best regularity we can expect is that $u_{xx}$ is Lipschitz at this point, and smooth everywhere else.
We therefore show separately the maximum error in two different intervals, namely $[30, 70]$, sufficiently far from the non-smooth point, and $[130, 170]$, containing the non-smooth point.

\begin{table}[h]
\begin{tabular}{|c|cc|cc|c|cc|cc|c|}
\hline
&\multicolumn{5}{c|}{$M=2$} & \multicolumn{5}{c|}{$M=4$}\\
\hline
& \multicolumn{2}{|c|}{$s\in[30, 70]$} & \multicolumn{2}{|c|}{$s\in[130, 170]$} &   \multirow{2}{*}{CPU(s)} & \multicolumn{2}{|c|}{$s\in[30, 70]$} & \multicolumn{2}{|c|}{$s\in[130, 170]$} & \multirow{2}{*}{CPU(s)} \\
\cline{1-5}\cline{7-10}
$k$ & error & order  & error & order  &  & error & order &  error & order & \\
\hline
    1  & 1.39E\,00 &  - & 1.81E\,00 &  - & 0.01 & 1.11E\,00 &  -  & 1.36E\,00 &  - & 0.01 \\ 
    2  & 7.65E-02 &  4.18 & 1.81E-01 &  3.33  & 0.01 & 1.69E-01 &  2.72 & 3.74E-01 &  1.86 & 0.02 \\ 
    3  & 1.65E-02 &  2.21 & 5.13E-02 &  1.82 & 0.02 &   2.40E-02 &  2.81 & 6.39E-02 &  2.55 & 0.05 \\ 
    4  & 1.02E-02 &  0.70  &  3.75E-02 &  0.45  &  0.10 &  3.63E-03 &  2.73 & 1.53E-02 &  2.06 & 0.28  \\ 
    5 & 4.65E-03 &  1.13   & 1.44E-02 &  1.38   &  0.80 & 1.35E-03 &  1.42 & 5.39E-03 &  1.51 & 1.10  \\ 
    6 & 3.04E-03 &  0.61  & 1.13E-02 &  0.35  & 3.98 & 4.03E-04 &  1.75  & 1.44E-03 &  1.91  &  7.12 \\ 
    7 &  1.89E-03 &  0.68 & 4.99E-03 &  1.18 & 28.54 & 5.48E-05 &  2.88 & 3.51E-04 &  2.03 &  54.76\\
    8 & 8.46E-04 &  1.16 & 2.32E-03 &  1.10 & 302.29 &  5.77E-06 &  3.25  & 1.57E-04 &  1.16 & 530.14\\
\hline 
  \end{tabular}
\caption{\label{tab:butt} Butterfly option. Refinements $N = 2^3\cdot 2^{k}$ and $J = N^2/8 $ for $k=1,\ldots,8$. }
\end{table}
The order of the scheme with $M=2$ is still 1 in both cases.
For $M=4$, the order in the first interval is still around 3, whereas it is reduced to around 2 close to the non-smooth point.

  Finally, in Table \ref{tab:butt_chip}, we report results where we replace the piecewise linear spatial interpolation with the piecewise cubic, and piecewise monotone interpolation
  defined in \cite{fritsch1980monotone}, as discussed in Section \ref{sec:interp}. Conjecturing an interpolation error of order $\Delta x^4$ per timestep, the most efficient refinement is achieved by balancing the time accumulated error $\Delta x^4/h$ with $h^3$ (for $M=4$) and we hence choose $\Delta x \sim h$, or $J \sim N$.
  \begin{table}[h]
\begin{tabular}{|c|cc|cc|c|cc|cc|c|}
\hline
&\multicolumn{5}{c|}{$M=2$} & \multicolumn{5}{c|}{$M=4$}\\
\hline
& \multicolumn{2}{|c|}{$s\in[30, 70]$} & \multicolumn{2}{|c|}{$s\in[130, 170]$} &   \multirow{2}{*}{CPU(s)} & \multicolumn{2}{|c|}{$s\in[30, 70]$} & \multicolumn{2}{|c|}{$s\in[130, 170]$} & \multirow{2}{*}{CPU(s)} \\
\cline{1-5}\cline{7-10}
$k$ & error & order  & error & order  &  & error & order &  error & order & \\
\hline
    1  &1.05E-01 &  - & 5.16E-01 &  -               &  0.01 & 4.39E-02 &  - &   2.06E-01  &  -  & 0.03 \\ 
    2  & 5.78E-02 &  0.86  & 2.00E-01    &  1.37 & 0.02 & 2.33E-02 &  0.91 &    1.18E-01 &  0.80   & 0.06 \\ 
    3  & 1.58E-02 &  1.87   &   7.91E-02   &  1.34  & 0.04 &  1.03E-02 &  1.18 &  4.23E-02 &  1.48 & 0.18 \\ 
    4  & 1.14E-02 &  0.47    &    3.64E-02  &   1.12     &  0.09 &  4.22E-03 &  1.29 &  1.62E-02 &  1.39  & 0.73  \\ 
    5 &  5.28E-03 &  1.11  	&  1.54E-02 &     1.24   &  0.38 & 1.45E-03 &  1.54 & 5.44E-03 &  1.57   &  2.79  \\ 
    6 & 3.07E-03 &  0.78     &   1.18E-02  &   0.38   & 6.66 & 4.13E-04 &  1.81  & 1.36E-03 &  2.00 &  13.22 \\ 
    7 &  1.86E-03 &  0.72    &  5.01E-03   &    1.24     & 27.54 & 5.51E-05 &  2.91 & 3.47E-04 &  1.98 &   53.53\\
    8 & 8.34E-04 &  1.16     &    2.30E-03   &    1.12   & 111.57 &  6.17E-06 &  3.16 & 1.56E-04 &  1.15  &  218.77\\
\hline 
  \end{tabular}
\caption{\label{tab:butt_chip} Butterfly option. Refinements $N = 2^3\cdot 2^{k}$ and $J = N/16 $ for $k=1,\ldots,8$, using ``monotonicity preserving'' third order interpolation.}
\end{table}
  The results are very similar to those in Table \ref{tab:butt}, but obtained for a significantly smaller number of spatial nodes. Due to the higher computational cost of the interpolation, though, this only results in savings for the highest refinement levels reported here.

}

\section{Conclusions and perspectives}
\label{sec:concl}

This paper analyses numerical schemes for HJB equations based on a discrete time approximation of the optimal control problem. Using purely probabilistic arguments and under very general assumptions, in Section \ref{sec:lower} we give a bound for the solution generated by such an approximation. The error bound obtained in this way allows us to improve one side of previous results from the literature. 
In ongoing work \cite{picarelli2019duality}, we are investigating the use of duality to obtain symmetric bounds.

{ 
The theoretically obtained convergence orders, although sharper than previously known results, are still not sharp in applications, where the solution is often at least piecewise more regular than is assumed for the analysis.
The new technique of splitting the total error into contributions from different approximation stages, however, allows us to understand why our higher order version outperforms the standard semi-Lagrangian scheme in numerical tests, despite the fact that the consistency order used in the traditional analysis is identically 1 for all schemes.
}
\\
\\
{\bf Acknowledgements}: {The first author acknowledges Olivier Bokanowski for a preliminary discussion on the subject.}

\appendix

\section{Bounds for the  Euler-Maruyama approximation}
 {
We consider the Euler-Maruyama approximation given by \eqref{eq:def_M} for $\alpha\equiv(a_0,\ldots, a_{N-1})\in \mathcal A_h$. This leads to the following expression for $\widetilde X^{t_n,x,\alpha}_{t_k}$ for $k=n,\ldots,N$:
$$
\widetilde X^{t_n,x,\alpha}_{t_k} = x + \sum^{k-1}_{i=n} \int^{t_{i+1}}_{t_i} \mu (t_i,\widetilde X^{t_n,x,\alpha}_{t_i},a_i) \, \mathrm{d}s + \int^{t_{i+1}}_{t_i} \sigma (t_i,\widetilde X^{t_n,x,\alpha}_{t_i},a_i) \, \mathrm{d} B_s.
$$
Moreover, by the very definition of $X^{t_n,x,\alpha}_{\cdot}$:
$$
 X^{t_n,x,\alpha}_{t_k} = x + \sum^{k-1}_{i=n} \int^{t_{i+1}}_{t_i} \mu (s,X^{t_n,x,\alpha}_{s},a_i) \, \mathrm{d}s + \int^{t_{i+1}}_{t_i} \sigma (s,X^{t_n,x,\alpha}_{s},a_i) \, \mathrm{d} B_s.
$$
Hereafter, we do not keep track of individual constants and denote by $C$ any nonnegative constant depending only on $T$ and  $C_0$ in assumption (H2) (an explicit computation of the constants involved can be found in  \cite[Appendix A.1]{picarelli2019duality}).
Using the Cauchy-Schwartz inequality and It{\^o} isometry, and (H2), one can easily show that
\begin{align*}
\mathbb E_{t_n,x}\left[ |\widetilde X^{\alpha}_{t_k} - X^{\alpha}_{t_k}|^2\right] 
& \leq C h \sum^{k-1}_{i=n}  \mathbb E_{t_n,x}\left[ | \widetilde X^{\alpha}_{t_i} - X^{\alpha}_{t_i} |^2 + h+ \underset{s\in [t_i,t_{i+1}]}\sup\left| X^{\alpha}_{s} - X^{\alpha}_{t_i}\right|^2  \right].
\end{align*} 
We now estimate the last term on the right-hand side. For any $\alpha\in \mathcal A$, by the Cauchy-Schwartz and Doob maximal inequalities, one has 
\begin{align*}
\mathbb E_{t_n,x}\left[\underset{s\in [t_i,t_{i+1}]}\sup\left| X^{\alpha}_{s} - X^{\alpha}_{t_i}\right|^2  \right]\leq
 C\, \mathbb E_{t_n,x}\left[ h \int^{t_{i+1}}_{t_i} | b(r,X^\alpha_r,\alpha_r) |^2 \mathrm d r    
 +  \int^{t_{i+1}}_{t_i} | \sigma(r,X^\alpha_r,\alpha_r) |^2 \mathrm d r\right]
\end{align*}
for some constant $C\geq 0$ independent of $\alpha$. 
Then, thanks to the linear growth of the coefficients $b$ and $\sigma$ due to assumption (H2), one obtains
\begin{align*}
\mathbb E_{t_n,x}\left[\underset{s\in [t_i,t_{i+1}]}\sup\left| X^{\alpha}_{s} - X^{\alpha}_{t_i}\right|^2  \right]\leq
 C\, \mathbb E_{t_n,x}\left[ h + \int^{t_{i+1}}_{t_i}  | X^\alpha_r|^2 \mathrm d r   \right].
\end{align*}
Recalling that by classical estimates on the process $X^{t_n,x}_\cdot$ {  (see for instance \cite[Theorem 3.1]{Touzi_book})  }one has 
\begin{align*}
{\mathbb E_{t_n,x}\Big[ \underset{s\in [t_i,t_{i+1}]}\sup \left| X^{\alpha}_{s}\right|^2 \Big]\leq C\left(1+ |x|^2\right)},
\end{align*}
we can put these estimates together to obtain
\begin{align*}
\mathbb E_{t_n,x}\left[ |\widetilde  X^{\alpha}_{t_k} - X^{\alpha}_{t_k}|^2\right] & \leq C h \sum^{k-1}_{i=n} \mathbb E_{t_n,x}\left[ | \widetilde X^{\alpha}_{t_i} - X^{\alpha}_{t_i}|^2\right]
 +  C h (1+|x|^2).
\end{align*}
Then, using the discrete version of Gronwall's inequality, one can conclude that for any $k=n,\ldots, N$,
\begin{align*}
\mathbb E_{t_n,x}\left[ |\widetilde  X^{\alpha}_{t_k} - X^{\alpha}_{t_k}|^2\right]  \leq  C h  (1+|x|^2)
\end{align*}}
for some constant $C$ independent of $h$ and $\alpha\in \mathcal A_h$.
{ The result of Proposition \ref{prop:strong_conv} then follows from H\"older's inequality.}
\bibliography{biblio.bib}
\bibliographystyle{plain}

\end{document}